\documentclass[11pt,a4paper,oneside]{amsart}

\usepackage{amsaddr}
\usepackage{amssymb}
\usepackage{bm}
\usepackage{booktabs}
\usepackage{commath}
\usepackage{colortbl}
\usepackage{dsfont}
\usepackage{enumerate}
\usepackage{esint}
\usepackage[left=0.09\paperwidth, right=0.15\paperwidth, bottom=0.12\paperheight]{geometry}
\usepackage{graphicx}
\usepackage[displaymath, mathlines]{lineno}
\usepackage{mathtools}
\usepackage{mathrsfs}
\usepackage{multirow,bigdelim}
\usepackage{url}
\usepackage{pdflscape}
\usepackage{rotating}
\usepackage[color=white,linecolor=black, textwidth=0.12\paperwidth]{todonotes}
\usepackage{wrapfig}

\numberwithin{equation}{section}
\newtheorem{theorem}{Theorem}
\numberwithin{theorem}{section}
\newtheorem{lemma}[theorem]{Lemma}

\newtheorem{prop}[theorem]{Proposition}
\theoremstyle{definition}

\newenvironment{remark}
  {\pushQED{\qed}\remarkx}
  {\popQED\endremarkx}

\usepackage{color}
\newcommand{\R}{\mathbb{R}}
\newcommand{\N}{\mathbb{N}}

\newcommand{\Q}{\mathbb{Q}}
\newcommand{\E}{\mathcal{E}}

\renewcommand{\P}{\mathbb P}

\DeclareMathOperator{\osc}{osc}

\DeclareMathOperator{\vol}{vol}

\DeclareMathOperator{\Div}{div}
\renewcommand{\div}{\Div}
\renewcommand{\Delta}{\triangle}

\usepackage{stmaryrd}
\newcommand{\jump}[1]{\left\llbracket #1 \right\rrbracket}

\newcommand{\T}{\mathcal{T}}
\newcommand{\V}{\mathcal{V}}

\definecolor{airforceblue}{rgb}{0.36, 0.54, 0.66}
\definecolor{ao(english)}{rgb}{0.0, 0.5, 0.0}

\DeclareFontFamily{U}{matha}{\hyphenchar\font45}
\DeclareFontShape{U}{matha}{m}{n}{
      <5> <6> <7> <8> <9> <10> gen * matha
      <10.95> matha10 <12> <14.4> <17.28> <20.74> <24.88> matha12
      }{}
\DeclareSymbolFont{matha}{U}{matha}{m}{n}
\DeclareFontSubstitution{U}{matha}{m}{n}

\DeclareFontFamily{U}{mathx}{\hyphenchar\font45}
\DeclareFontShape{U}{mathx}{m}{n}{
      <5> <6> <7> <8> <9> <10>
      <10.95> <12> <14.4> <17.28> <20.74> <24.88>
      mathx10
      }{}
\DeclareSymbolFont{mathx}{U}{mathx}{m}{n}
\DeclareFontSubstitution{U}{mathx}{m}{n}

\DeclareMathDelimiter{\vvvert}{0}{matha}{"7E}{mathx}{"17}

\DeclareMathOperator{\ext}{ext}
\DeclareMathOperator{\eff}{eff}
\DeclareMathOperator{\loc}{loc}
\DeclareMathOperator{\irr}{int}
\DeclareMathOperator{\RT}{RT}
\DeclareMathOperator{\meas}{meas}
\newcommand{\bbone}{\mathds{1}}
\newcommand{\seminorm}[1]{\vvvert #1 \vvvert}
\newcommand{\ltwonorm}[1]{\lVert #1 \rVert}
\newcommand{\hta}{{\check \T_a}}
\newcommand{\hT}{{\check T}}
\newcommand{\he}{{\check e}}
\newcommand{\hE}{{\check \E}}
\newcommand{\dr}{{\tilde r}}
\renewcommand{\emptyset}{{\varnothing}}
\newcommand{\hprime}[1]{H^1_{*, #1}(\hta)}
\newcommand{\laplace}{\mathcal{4}}

\renewcommand{\vec}{\bm}
\renewcommand{\norm}[1]{\lVert #1 \rVert}
\newcommand{\refsq}{{\hat T}}
\renewcommand{\sp}[1]{{\mathscr #1}} 
\newcommand{\mat}[1]{{\bm #1}} 
\newcommand{\ltwoinp}[4][\vec x]{\langle #2, #3\rangle _{#4}}
\newcommand{\honeinp}[3]{\langle \nabla #1, \nabla #2\rangle _{#3}} 
\newcommand{\honeinpx}[3]{\langle \tfrac{\dif}{\dif x} #1, \tfrac{\dif}{\dif x} #2\rangle _{#3}} 

\DeclareFontFamily{U}{mathx}{\hyphenchar\font45}
\DeclareFontShape{U}{mathx}{m}{n}{
      <5> <6> <7> <8> <9> <10>
      <10.95> <12> <14.4> <17.28> <20.74> <24.88>
      mathx10
      }{}
\DeclareSymbolFont{mathx}{U}{mathx}{m}{n}
\DeclareFontSubstitution{U}{mathx}{m}{n}
\DeclareMathAccent{\widecheck}{0}{mathx}{"71}
\DeclareMathAccent{\wideparen}{0}{mathx}{"75}

\title{On $p$-robust saturation on quadrangulations}
\author{Jan Westerdiep}
\email{j.h.westerdiep@uva.nl}
\address{Korteweg-de Vries Institute for Mathematics, University of Amsterdam, \\
P.O. Box 94248, 1090 GE Amsterdam, The Netherlands}
\date{\today}
\subjclass[2010]{}

\thanks{\emph{Funding:} The author was supported by the Netherlands Organisation for Scientific Research (NWO) under contract.~no.~613.001.652}

\begin{document}
\begin{abstract}
For the Poisson problem in two dimensions, posed on a domain partitioned into
axis-aligned rectangles with up to one hanging node per edge, we envision an
efficient error reduction step in an instance-optimal $hp$-adaptive finite
element method. Central to this is the problem: Which increase in local polynomial degree
ensures $p$-robust contraction of the error in energy norm? We reduce this problem to a small
number of saturation problems on the reference square, and provide strong numerical
evidence for their solution.

  ~~

  \smallskip
  \noindent\textsc{\subjclassname.}
    65N12, 
    65N30, 
    65N50. 

  \smallskip
  \noindent\textsc{\keywordsname.}
    Poisson equation, $p$-robustness, $hp$-adaptive finite element method, convergence, 1-irregular quadrilateral meshes.
\end{abstract}

\maketitle

\section{Introduction}
We consider the Poisson model problem of finding $u: \Omega \to \R$ that satisfies
\begin{equation}
  - \laplace u = f ~~ \text{in $\Omega$}, \quad u = 0 ~~ \text{on $\partial \Omega$},
  \label{eqn:poisson}
\end{equation}
where $\Omega \subset \R^2$ is a connected union of a finite number of essentially disjoint axis-aligned rectangles, and $f \in L_2(\Omega)$. Given a 1-irregular quadrangulation $\T$ of the domain into essentially disjoint axis-aligned rectangles, let $U_\T$ be the space of continuous piecewise polynomials of variable degree w.r.t.~$\T$ that vanish on the domain boundary, and let $u_\T \in U_\T$ be its best approximation of $u$ in energy norm.
We are interested in the following \emph{contraction problem}:
\begin{quotation}
  Which ($hp$-)refinement $\overline \T$ of $\T$ ensures \emph{contraction} of the energy error, in that
  \[
    \norm{\nabla u - \nabla u_{\overline \T}}_{L_2(\Omega)} \leq \alpha \norm{\nabla u - \nabla u_\T}_{L_2(\Omega)}
  \]
  for some fixed $\alpha < 1$ independent of $\T$ and its local polynomial degrees?
\end{quotation}
It is well known that this problem is equivalent to the \emph{saturation problem} of finding $\overline \T$ for which $\norm{\nabla u_{\T} - \nabla u_{\overline \T}}_{L_2(\Omega)} \leq \rho \norm{\nabla u - \nabla u_{\T}}_{L_2(\Omega)}$ for some $\rho > 1$; in this work, we will study the saturation problem, posed locally on a patch of rectangles around a given vertex.

The idea of $hp$-adaptive finite element methods started gaining momentum in the eighties with the seminal works of Babu\v{s}ka and colleagues \cite{Guo1986,Guo1986a}, where they showed that for certain elliptic boundary value problems, careful a priori decisions between $h$-refinement and $p$-enrichment can yield a sequence of finite element solutions that exhibit an exponential convergence rate with respect to the number of degrees of freedom (DoFs).

Since then, a lot of research has been done on $hp$-adaptive refinement driven by a posteriori error estimates, but despite the interest, it was not until 2015 that Canuto \emph{et al.}~\cite{Canuto2016} proved the instance optimality---and with it, exponential convergence---of one such method. The method alternates between (i) a module that refines the triangulation to reduce the energy error with a sufficiently large fixed factor, and (ii) an $hp$-coarsening strategy developed by Binev~\cite{Binev2015} that essentially removes near-redundant degrees of freedom to yield an \emph{instance optimal} triangulation. The sequence of triangulations found after each $hp$-coarsening step then exhibits the desired exponential decay.

In \cite{Canuto2016}, the error reducer of step (i) was a typical $h$-adaptive loop driven by an element-based D\"orfler marking, using the a posteriori error estimator of Melenk and Wohlmuth \cite{Melenk2001}. The efficiency of this error estimator is known to be sensitive to polynomial degrees, which can lead to a runtime that grows exponentially in the number of DoFs.

In \cite{Canuto2017}, Canuto \emph{et al.}~explore a different error reduction strategy. It is an adaptive $p$-enrichment loop driven by a vertex-based D\"orfler marking using the equilibrated flux estimator, which was shown to be $p$-robust in \cite{Braess2009b}. They show that solving a number of \emph{local saturation problems}, posed on patches around a vertex in terms of dual norms of residuals, leads to an efficient error reducer.
They were able to reduce the problem, stated on triangulations without hanging nodes, to three problems on a reference triangle, and provided numerical results indicating that uniform saturation holds when increasing the local degree $p$ to $p + \lceil \lambda p \rceil$ for any constant $\lambda > 0$, but that an additive quantity of the form $p + n$ is insufficient.

Finally, in~\cite{Canuto2017a}, Canuto \emph{et al.}~present a theoretical result solving slightly ill-fitted variant on one of the reference problems. Whereas the former two works discuss partitions of the domain into triangles, the latter proves a result on the reference \emph{square} instead. As a first step towards repairing this inconsistency, the present work considers quadrangulations. Our goal of adaptive approximation requires us to consider partitions with hanging nodes, which introduce complications. A key contribution in this regard has been made by Dolej\v{s}\'i \emph{et al.}~in \cite{Dolejsi2016}.

\subsection*{Contributions of this work}
This work has two related goals. In a larger context, we aim to take a step in the direction of a polynomial-time $hp$-adaptive FEM with exponential convergence rates. In particular, we are interested in finding an efficient error reducer. To this end, we reduce the \emph{saturation problem} to a small number of problems on the reference square, and provide numerical results suggesting these problems may be solvable theoretically. We detail on the computational aspect as well, so that the numerical results are easily reproducible.

On a lower level, this work aims to extend the reduction to reference problems of \cite{Canuto2017} from regular triangulations equipped with polynomials of certain \emph{total degree} to the situation of 1-irregular quadrangulations with polynomials of certain \emph{degree in each variable separately}. Allowing 1-irregularity makes for a rather involved adaptation of the original result, as the refined regular patches are not necessarily composed of elements containing the original vertex.

\subsection*{Organisation of this work}
In \S 2, we will establish our notation. In \S 3, we show a contractive property within ($hp$-)adaptive finite element context, under a local patch-based saturation assumption. In \S 4, we reduce the local saturation assumption to boundedness of a small number of reference saturation coefficients. In \S 5, we discuss the computation of these coefficients, and in \S 6 we show numerical results suggesting that these quantities are in fact bounded.

\section{Notation and setup}
In this work, $A \lesssim B$ will mean that $A$ may be bounded by a multiple of $B$, independently of the parameters of $A$ and $B$, and $A \eqsim B$ means that $A \lesssim B$ and $B \lesssim A$.

\subsection{Quadrangulations}
We consider partitions $\T$ of the domain into closed axis-aligned rectangles. We impose that $T_1^{\circ} \cap T_2^{\circ} = \emptyset$ for $T_1, T_2 \in \T$ distinct, and allow \emph{irregularity} along shared edges, meaning that $T_1 \cap T_2$ may be empty, a shared vertex, or part of a shared edge. Irregularity allows for highly adaptive quadrangulations, but to ensure $p$-robustness of our main result, we restrict ourselves to \emph{1-irregularity}: every element edge may contain up to one \emph{hanging node}---a vertex in the interior of a neighbours edge.

To avoid pathological situations, we lastly assume that every $\T$ is found from a regular initial quadrangulation (i.e., without hanging nodes) by means of repeated red-refinement (subdivision into four similar rectangles), thus automatically ensuring uniform shape regularity. We collect the family of such quadrangulations in the set $\mathbb T$. See Figure~\ref{fig:allquadrangulations} for a few examples.

The set of \emph{nonhanging} vertices of a quadrangulation $\T \in \mathbb T$ form the set $\V_\T$, and $\V_{\T}^{\ext}$ (resp.~$\V_{\T}^{\irr}$) is its subset of boundary (resp.~interior) vertices. The edges of $\T$ form the set $\E_{\T}$.

\begin{figure}
  \begin{tikzpicture}[scale=1.5]
    \draw (0,0) rectangle (1,1);
    \draw (1,0) rectangle (2,1);
    \draw (0,1) rectangle (1,2);
    \draw (1,1) rectangle (2,2);
    \draw[line width=1mm] (0,0) -- (2,0) -- (2,2) -- (0,2) -- (0,0) -- (2,0);
    \node[below] at (1,0){(a)};
  \end{tikzpicture}\quad
  \begin{tikzpicture}[scale=1.5]
    \draw (0,.5) rectangle (.5,1);
    \draw (.5,0) rectangle (1,.5);
    \draw (.5,.5) rectangle (1,1);

    \draw (0,0) rectangle (.25,.25);
    \draw (0,.25) rectangle (.25,.5);
    \draw (.25,0) rectangle (0.5,0.25);
    \draw (.25,.25) rectangle (0.5,0.5);

    \draw (1,0) rectangle (2,1);
    \draw (0,1) rectangle (1,2);
    \draw (1,1) rectangle (2,2);
    \draw[line width=1mm] (0,0) -- (2,0) -- (2,2) -- (0,2) -- (0,0) -- (2,0);
    \node[below] at (1,0){(b)};
  \end{tikzpicture}\quad
  \begin{tikzpicture}[scale=1.5]
    \draw (0,0) rectangle (.5,.5);
    \draw (.5,0) rectangle (1,.5);
    \draw (.5,.5) rectangle (1,1);

    \draw (0,.5) rectangle (.25,.75);
    \draw (0,.75) rectangle (.25,1);
    \draw (.25,.5) rectangle (.5,.75);
    \draw (.25,.75) rectangle (.5,1);

    \draw (1,0) rectangle (2,1);
    \draw (0,1) rectangle (1,2);
    \draw (1,1) rectangle (2,2);
    \draw[line width=1mm] (0,0) -- (2,0) -- (2,2) -- (0,2) -- (0,0) -- (2,0);
    \node[below] at (1,0){(c)};
  \end{tikzpicture}\quad
  \begin{tikzpicture}
    \draw (0,0) rectangle (1,2);
    \draw (1,0) rectangle (3,1);
    \draw (1,1) rectangle (2,2);
    \draw (2,1) rectangle (3,3);
    \draw (0,2) rectangle (2,3);
    \draw[line width=1mm] (0,0) -- (3,0) -- (3,3) -- (0,3) -- (0,0) -- (3,0);
    \node[below] at (1.5,0){(d)};
  \end{tikzpicture}
  \caption{(a) Regular initial quadrangulation $\T_{a} \in \mathbb T$ of a square domain $\Omega$; (b) 1-irregular quadrangulation found from $\T_a$ through red-refinement; (c) quadrangulation found from $\T_a$ that is not 1-irregular; (d) typical ``pathological'' quadrangulation excluded from this paper.}
  \label{fig:allquadrangulations}
\end{figure}
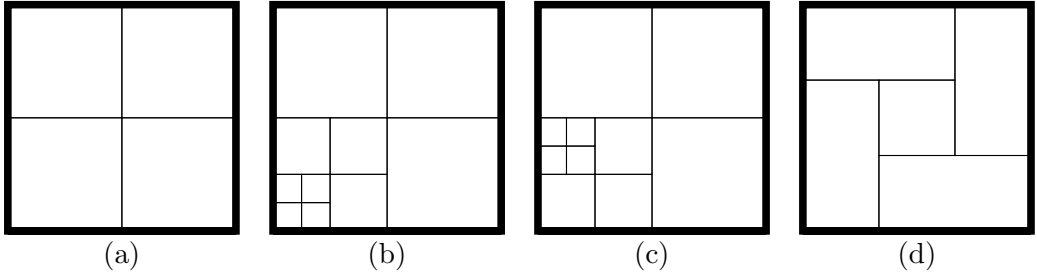

\subsection{Polynomials on quadrangulations}
For $T \in \T \in \mathbb T$, write $\Q_{p,p'}(T)$ for the space of polynomials on $T$ of degree at most $p$ and $p'$ in the two canonical coordinates. Define $\Q_p(T) := \Q_{p,p}(T)$. Equip each $T$ with a local polynomial degree $p_T = p_{T, \T}$ for which $p_T \geq 1$, and write $\vec p_{\T} := (p_T)_{T \in \T}$ for the collection of these local degrees. Then with $\Q_{\vec p_{\T}}^{-1}(\T) := \prod_{T \in \T} \Q_{p_T}(T)$ the space of broken piecewise polynomials over $\T$ of degree at most $p_T$ on every element, we introduce the finite-dimensional subspace $U_\T$ of $H^1_0(\Omega)$ as
\[
  U_\T := H^1_0(\Omega) \cap \Q_{\vec p_\T}^{-1}(\T) \quad (\T \in \mathbb T).
\]
Denote with $u \in H^1_0(\Omega)$ the weak solution to~\eqref{eqn:poisson}, and its Galerkin approximation as $u_\T \in U_\T$.

\subsection{Patches}
Let $\psi_a$ be the \emph{hat function} characterized by $\psi_a \in C(\overline \Omega) \cap \Q_1^{-1}(\T)$ and $\psi_a(b) = \delta_{ab}$ for all $b \in \V_\T$. Let $\omega_a = \omega_{\T,a}$ be its support, and denote with $\T_a \subset \T$ the quadrangulation restricted to $\omega_a$; we call this set a \emph{patch}.
For each nonhanging vertex $a \in \V_{\T}$, write
\[
  \vec p_a := \vec p_{\T_a} = (p_T)_{T \in \T_a}, \quad p_a := \max \vec p_a.
\]
It will prove meaningful to decompose the patch edges $\E_{\T_a} := \set{e \in \E_\T : e \subset \omega_a}$ as
\[
  \E_a^{\ext} := \set{e \in \E_{\T_a}: e \subset \partial \omega_a}, \quad \E_a^{\irr} := \E_{\T_a} \setminus \E_a^{\ext}.
\]
We moreover decompose exterior edges into \emph{Dirichlet} and \emph{Neumann} edges, through
\[
  \E_a^{\ext,D} := \set{e \in \E_a^{\ext}: a \in e}, \quad \E_a^{\ext,N} := \E_a^{\ext} \setminus \E_a^{\ext,D},
\]
giving rise to the local spaces
\[
  H^1_*(\omega_a) := \begin{cases}
    \set{v \in H^1(\omega_a) : \ltwoinp{v}{\bbone}{\omega_a} = 0} & a \in \V_\T^{\irr}, \\
    \set{v \in H^1(\omega_a) : v|_{e} = 0 \text{ on } e \in \E^{\ext,D}_a} & a \in \V_\T^{\ext}.
  \end{cases}
\]
\begin{remark}
  Our definition of $H^1_*(\omega_a)$ differs from its definition in, e.g., \cite{Canuto2017, Dolejsi2016} when $a \in \V^{\ext}_\T$. In previous works, functions in $H^1_*(\omega_a)$ vanish on the entire part $\partial \omega_a \cap \partial \Omega$; in our case, they vanish only on those edges $e \subset \partial \omega_a \cap \partial \Omega$ for which $a \in e$. This altered definition was convenient for our proof, and relevant dual norm properties of the residual in \S\ref{sec:reduce} carry over to our case.
\end{remark}

\subsection{Refined patches}
\begin{figure}
  \begin{tikzpicture}[scale=1.5]
    \draw (0,.5) rectangle (.5,1);
    \draw (.5,0) rectangle (1,.5);
    \draw (.5,.5) rectangle (1,1);

    \draw (0,0) rectangle (.25,.25);
    \draw (0,.25) rectangle (.25,.5);
    \draw (.25,0) rectangle (0.5,0.25);
    \draw (.25,.25) rectangle (0.5,0.5);

    \draw (1,0) rectangle (2,1);
    \draw (0,1) rectangle (1,2);

    \draw (1,1) rectangle (1.5,1.5);
    \draw (1,1.5) rectangle (1.5,2);
    \draw (1.5,1) rectangle (2,1.5);
    \draw (1.5,1.5) rectangle (2,2);

    \node at (1,1){$\bullet$};
    \node[above right] at (1,1){$a_2$};

    \node at (1.5,1.5){$\bullet$};
    \node[above right] at (1.5,1.5){$a_1$};

    \node at (0,0.5){$\bullet$};
    \node[above right] at (0,0.5){$a_3$};

    \node[below] at (1,0){(a) Quadrangulation $\T$};
    \draw[line width=1mm] (0,0) -- (2,0) -- (2,2) -- (0,2) -- (0,0) -- (2,0);
  \end{tikzpicture}\quad
  \begin{tikzpicture}[scale=1.5]

    \draw (1,1) rectangle (1.5,1.5);
    \draw (1,1.5) rectangle (1.5,2);
    \draw (1.5,1) rectangle (2,1.5);
    \draw (1.5,1.5) rectangle (2,2);

    \node at (1.5,1.5){$\bullet$};
    \node[above right] at (1.5,1.5){$a_1$};

    \node[below] at (1.5,0.5){(b) $\T_{a_1} = \check \T_{a_1}$};

    \draw[line width=1mm] (1,1) -- (2,1) -- (2,2) -- (1,2) -- (1,1) -- (2,1);
    \draw[line width=0.5mm,white] (1,1) -- (2,1) -- (2,2) -- (1,2) -- (1,1) -- (2,1);  
  \end{tikzpicture}\quad
  \begin{tikzpicture}[scale=1.5]
    \draw (0,.5) rectangle (.5,1);
    \draw (.5,0) rectangle (1,.5);
    \draw (.5,.5) rectangle (1,1);

    \draw (1,0) rectangle (2,1);
    \draw (0,1) rectangle (1,2);

    \draw (1,1) rectangle (1.5,1.5);
    \draw (1,1.5) rectangle (1.5,2);
    \draw (1.5,1) rectangle (2,1.5);

    \node at (1,1){$\bullet$};
    \node[above right] at (1,1){$a_2$};

    \draw[dashed] (0.5,1) -- (0.5,2);
    \draw[dashed] (0,1.5) -- (1,1.5);

    \draw[dashed] (1.5,0) -- (1.5,1);
    \draw[dashed] (1,0.5) -- (2,0.5);

    \node[below] at (1,0){(c) $\T_{a_2}$ and $\check \T_{a_2}$};

    \draw[line width=1mm] (0.5,0) -- (2,0) -- (2,1.5) -- (1.5,1.5) -- (1.5,2) -- (0,2) -- (0,0.5) -- (0.5,0.5) -- (0.5,0) -- (2,0);
    \draw[line width=0.5mm,white] (0.5,0) -- (2,0) -- (2,1.5) -- (1.5,1.5) -- (1.5,2) -- (0,2) -- (0,0.5) -- (0.5,0.5) -- (0.5,0) -- (2,0);
  \end{tikzpicture}\quad
  \begin{tikzpicture}[scale=3]
    \draw (0,0.5) rectangle (0.5,1);
    \draw (0,0.25) rectangle (0.25,0.5);
    \draw (0.25,0.25) rectangle (0.5,0.5);
    \node at (0,0.5){$\bullet$};
    \draw[dashed] (0,0.75) -- (0.5,0.75);
    \draw[dashed] (0.25,0.5) -- (0.25,1);
    \node[above right] at (0,0.5){$a_3$};

    \draw[line width=1mm] (0,0.25) -- (0.5,0.25) -- (0.5,1) -- (0,1) -- (0,0.25) -- (0.5, 0.25);
    \draw[line width=0.5mm,white] (0,0.25) -- (0.5,0.25) -- (0.5,1) -- (0,1);

    \node[below] at (0.25,0){(d) $\T_{a_3}$ and $\check \T_{a_3}$};
  \end{tikzpicture}
  \caption{Example refined patches. (a) Example quadrangulation with three vertices. (b) Regular patch $\T_{a_1}$ of interior vertex $a_1 \in \V_{\T}^{\irr}$ that equals its smallest regular refinement $\check \T_{a_1}$. (c) 1-irregular patch $\T_{a_2}$ of $a_2 \in \V_{\T}^{\irr}$; $\check \T_{a_2}$ is a refinement of $\T_{a_2}$ denoted by dashed lines. (d) Patch of boundary vertex $a_3 \in \V_\T^{\ext}$. The thick black line indicates edges in $\hE_a^{\ext,D}$; the double line edges in $\hE_a^{\ext,N}$.}
  \label{fig:exquadrangulation}
\end{figure}

Given $\T_a$, define the \emph{refined patch} $\hta$ as the smallest regular red-refinement of $\T_a$, and let each $\hT \in \hta$ inherit its local degree $p_{\hT}$ from its parent in $\T_a$. The key insight of considering the regular refinement $\hta$ instead of $\T_a$ was proposed in~\cite{Dolejsi2016} and allows us to write the discrete residual below as a sum of inner products with local polynomials.

For the edge sets $\E^{\irr}_a, \E^{\ext}_a, \E^{\ext,N}_a, \E^{\ext,D}_a$, define their $\check{\,}$-variants as the set of children edges; e.g., $\hE^{\irr}_a := \{\he \in \E_{\hta}: \exists e \in \E^{\irr}_a \text{~~s.t.~~} \he \subset e\}$.
See Figure~\ref{fig:exquadrangulation} for a few examples.

\section{Reducing the contraction problem to local saturation problems}
\label{sec:reduce}
This section will follow the same general structure of~\cite[\S3--4]{Canuto2017}; proofs are omitted for brevity but follow analogously to their counterpart in~\cite{Canuto2017}.

For $\omega$ a proper subset of $\overline \Omega$, let $\ltwoinp{\cdot}{\cdot}{\omega}$ denote the $L_2(\omega)$- or $[L_2(\omega)]^2$-inner product, and $\norm{\cdot}_{\omega}$ its norm. Unless mentioned otherwise, closed subspaces of $H^1(\omega)$ on which $\norm{\nabla \cdot}_{\omega}$ is equivalent to $\norm{\cdot}_{H^1(\omega)}$ are equipped with the $H^1(\omega)$-seminorm $\seminorm{\cdot}_\omega := \norm{\nabla \cdot}_{\omega}$.

\subsection{Residual}
For $\he \in \hE_a^{\irr}$, we denote with $\jump{\cdot}$ the jump operator and with $\vec n_{\he}$ a unit normal vector of $\he$.
We then define the global and localized residuals as
\[
  r_\T(v) := \ltwoinp{f}{v}{\Omega} - \honeinp{u_\T}{v}{\Omega}, \quad r_a(v) := r_\T(\psi_a v) \quad (v \in H^1(\Omega)),
\]
and observe that after integration by parts, the localized residual satisfies
\[
  r_a(v) = \sum_{\hT \in \hta} \ltwoinp{\psi_a (f + \laplace u_\T)}{v}{\hT} + \sum_{\he \in \hE_a^{\irr}} \ltwoinp{\psi_a \jump{\nabla u_\T \cdot \vec n_{\he}}}{v}{\he}.
\]
The following shows that the norms $\norm{r_a}_{H^1_*(\omega_a)'}$ may be used as a posteriori error indicators.
\begin{prop}[{Reliability and efficiency~\cite[Prop.~3.1]{Canuto2017}}]
  \label{prop:eff}
  There is a constant $C_{\eff} > 0$ with
  \[
    \seminorm{u - u_\T}^2_{\Omega} \leq 3 \sum_{a \in \V_\T} \norm{r_a}^2_{H^1_*(\omega_a)'}, \quad \norm{r_a}_{H^1_*(\omega_a)'} \leq C_{\eff} \seminorm{u - u_\T}_{\omega_a} ~~ (a \in \V_\T).
  \]
\end{prop}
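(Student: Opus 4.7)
My plan is to derive reliability from Galerkin orthogonality combined with the partition of unity $\sum_{a \in \V_\T} \psi_a \equiv 1$ on $\overline \Omega$. Setting $e := u - u_\T \in H^1_0(\Omega)$, the weak formulation of~\eqref{eqn:poisson} gives $r_\T(e) = \seminorm{e}^2_\Omega$, which partition of unity then rewrites as $\seminorm{e}^2_\Omega = \sum_{a \in \V_\T} r_a(e)$. To introduce the dual norms I would view each $r_a(e)$ as $r_a$ applied to an element of $H^1_*(\omega_a)$: for $a \in \V_\T^{\ext}$, the containment $e \in H^1_0(\Omega)$ forces $e|_{\omega_a}$ to vanish on every edge of $\E_a^{\ext,D}$ (these edges lie on $\partial \Omega$), so the restriction already lies in $H^1_*(\omega_a)$; for $a \in \V_\T^{\irr}$, the inclusion $\psi_a \in U_\T$ together with Galerkin orthogonality yields $r_a(c) = r_\T(\psi_a c) = 0$ for every constant $c$, so $e$ may be replaced on $\omega_a$ by $e$ minus its mean there, which lies in $H^1_*(\omega_a)$ and has the same seminorm as $e$ on $\omega_a$. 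Two applications of Cauchy--Schwarz together with the finite-overlap bound $\sum_{a \in \V_\T} \seminorm{e}^2_{\omega_a} \leq M \seminorm{e}^2_\Omega$, with $M$ uniformly bounded by shape regularity of $\mathbb T$, then deliver the reliability inequality; the precise constant $3$ would follow by the sharper accounting of~\cite[Prop.~3.1]{Canuto2017}.

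For efficiency, fix $a \in \V_\T$ and $v \in H^1_*(\omega_a)$. I would first verify that $\psi_a v$, extended by zero outside $\omega_a$, is a legitimate test function in $H^1_0(\Omega)$: on $\partial \omega_a \setminus \E_a^{\ext,D}$ the hat function $\psi_a$ itself vanishes (since $\psi_a$ vanishes at every vertex other than $a$, hence on every edge of $\partial \omega_a$ not touching $a$), while on $\E_a^{\ext,D}$ the defining trace condition on $v$ vanishes, so the extension has zero trace on $\partial \omega_a$ and in particular on $\partial \Omega$. Using $-\laplace u = f$ I would rewrite $r_a(v) = r_\T(\psi_a v) = \honeinp{u - u_\T}{\psi_a v}{\omega_a}$, from which Cauchy--Schwarz gives $|r_a(v)| \leq \seminorm{u - u_\T}_{\omega_a} \seminorm{\psi_a v}_{\omega_a}$. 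The product rule, combined with $\norm{\psi_a}_\infty \leq 1$, a uniform bound $\norm{\nabla \psi_a}_\infty \lesssim h_a^{-1}$ (with $h_a$ the patch diameter), and a Poincaré/Friedrichs inequality on $H^1_*(\omega_a)$, then produces $\seminorm{\psi_a v}_{\omega_a} \leq C_{\eff} \seminorm{v}_{\omega_a}$; taking the supremum over $v$ closes the argument.

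The step I expect to be the main obstacle is the Poincaré/Friedrichs estimate for boundary vertices under the altered definition of $H^1_*(\omega_a)$ adopted here. Vanishing of $v$ only on the Dirichlet edges $\E_a^{\ext,D}$ touching $a$---rather than on the entire part $\partial \omega_a \cap \partial \Omega$ as in~\cite{Canuto2017,Dolejsi2016}---must still suffice to control $\norm{v}_{\omega_a}$ by $\seminorm{v}_{\omega_a}$ with a constant depending only on the shape-regularity parameters of $\mathbb T$. The preceding remark flags exactly this subtlety, and it is the one place where the argument genuinely diverges from its triangular template in~\cite{Canuto2017}.
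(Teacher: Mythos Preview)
Your proposal is correct and follows essentially the same approach as the paper, which omits the proof and refers to~\cite[Prop.~3.1]{Canuto2017} with the blanket remark that arguments carry over analogously. You also correctly isolate the one genuine adaptation required---the Friedrichs inequality on $H^1_*(\omega_a)$ for boundary vertices under the paper's altered definition---which the paper itself flags (but does not prove) in the remark following the definition of $H^1_*(\omega_a)$.
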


\subsection{Data oscillation and discrete residual}
For a rectangle $T$, define $\Pi_p^T$ as the $L_2(T)$-orthogonal projection onto $\Q_p(T)$. The approximation $\Pi_{\hta} f$ to $f$ is then piecewise defined through
  $(\Pi_{\hta} f)|_{\hT} := \Pi^{\hT}_{p_{\hT}} f|_{\hT}$.
The difference between $f$ and its approximation is quantified by the \emph{data oscillation}, defined as
\[
  \osc(f, \T)^2 := \sum_{\hT \in \hta} h_{\hT}^2 \norm{f - \Pi^{\hT}_{p_{\hT}}}^2_{\hT}.
\]
We will study the \emph{discrete residual}, computed on discrete data $\Pi_{\hta} f$ instead of $f$:
\begin{equation}
  \dr_a(v) := \sum_{\hT \in \hta} \ltwoinp[s]{\phi_{\hT}}{v}{\hT} + \sum_{\he \in \hE_a^{\irr}} \ltwoinp[s]{\phi_{\he}}{v}{\he} \quad (v \in H^1(\omega_a))
  \label{eqn:discres}
\end{equation}
where
\[
  \phi_{\hT} := \psi_a (\Pi^{\hT}_{p_{\hT}} f + \laplace u_\T) \in \Q_{p_{\hT}+1}(\hT), \quad \text{and} \quad \phi_{\he} := \psi_a \jump{\nabla u_\T \cdot \vec n_{\he}}\in \P_{p_a+1}(\he).
\]
\begin{prop}[{Residual discrepancy~\cite[Cor.~3.4]{Canuto2017}}]
  \label{prop:discrep}
  There is a constant $C_{\osc} > 0$ with
  \[
    \abs{\sqrt{\sum_{a \in \V_\T} \norm{\dr_a}^2_{H^1_*(\omega_a)'}} - \sqrt{\sum_{a \in \V_\T} \norm{r_a}^2_{H^1_*(\omega_a)'}}} \leq C_{\osc} \osc(f, \T).
  \]
\end{prop}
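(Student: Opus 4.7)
The plan is to reduce the claim to a patchwise bound on $r_a - \dr_a$ and then assemble over vertices. Comparing the integration-by-parts expression for $r_a$ with the definition~\eqref{eqn:discres} of $\dr_a$, the jump contributions coincide exactly, so only the element terms differ and
\[
  r_a(v) - \dr_a(v) = \sum_{\hT \in \hta} \ltwoinp{f - \Pi^{\hT}_{p_{\hT}} f}{\psi_a v}{\hT}.
\]
Since $\Pi^{\hT}_{p_{\hT}}$ is the $L_2(\hT)$-projection onto $\Q_{p_{\hT}}(\hT)$ with $p_{\hT} \geq 1$, the factor $f - \Pi^{\hT}_{p_{\hT}} f$ is $L_2(\hT)$-orthogonal to constants, so I may replace $\psi_a v$ by $\psi_a v - c_{\hT}$, where $c_{\hT}$ is the $L_2(\hT)$-mean of $\psi_a v$.

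Applying Poincaré--Wirtinger on the rectangle $\hT$ gives $\norm{\psi_a v - c_{\hT}}_{\hT} \lesssim h_{\hT} \norm{\nabla(\psi_a v)}_{\hT}$, and the product rule combined with $\norm{\nabla \psi_a}_\infty \lesssim h_{\omega_a}^{-1}$ yields $\norm{\nabla(\psi_a v)}_{\hT} \lesssim h_{\omega_a}^{-1} \norm{v}_{\hT} + \seminorm{v}_{\hT}$. The unwanted $\norm{v}_{\hT}$-term is absorbed by a global Poincaré/Friedrichs inequality on the patch: for $v \in H^1_*(\omega_a)$ either the zero-mean constraint (interior vertex) or vanishing on the Dirichlet edges $\E^{\ext,D}_a$ touching $a$ (boundary vertex) forces $\norm{v}_{\omega_a} \lesssim h_{\omega_a} \seminorm{v}_{\omega_a}$ with a constant depending only on shape regularity. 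Thus $\norm{\psi_a v - c_{\hT}}_{\hT} \lesssim h_{\hT} \seminorm{v}_{\omega_a}$. Applying Cauchy--Schwarz on each element and then across the uniformly boundedly many elements of $\hta$ gives
\[
  \norm{r_a - \dr_a}_{H^1_*(\omega_a)'} \lesssim \Bigl(\sum_{\hT \in \hta} h_{\hT}^2 \norm{f - \Pi^{\hT}_{p_{\hT}} f}^2_{\hT}\Bigr)^{1/2}.
\]

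The claim then follows by assembly: the componentwise reverse triangle inequality $\abs{\norm{r_a}_{H^1_*(\omega_a)'} - \norm{\dr_a}_{H^1_*(\omega_a)'}} \leq \norm{r_a - \dr_a}_{H^1_*(\omega_a)'}$ together with Minkowski in $\ell^2$ over $a \in \V_\T$ bounds the left-hand side by $\bigl(\sum_a \norm{r_a - \dr_a}^2_{H^1_*(\omega_a)'}\bigr)^{1/2}$; inserting the patchwise estimate, swapping the sum order, and using the finite overlap of the refined patches (each $\hT$ meets a uniformly bounded number of them under red-refinement) then collapses the double sum to $C_{\osc}^2 \osc(f,\T)^2$.

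The main obstacle is the careful balancing of length scales in the patchwise step: the hat gradient scales like $h_{\omega_a}^{-1}$ but the oscillation carries only the finer weight $h_{\hT}$, and these can differ substantially on deeply refined 1-irregular patches. The trade is made possible by the global Poincaré on $\omega_a$, which pays a factor of $h_{\omega_a}$ in exchange for a $\seminorm{v}_{\omega_a}$-norm; verifying that this Poincaré constant is uniform in $\T \in \mathbb T$ under the present modified definition of $H^1_*(\omega_a)$---in which boundary functions are only required to vanish on edges adjacent to $a$---is the single point requiring extra care compared to~\cite{Canuto2017}.
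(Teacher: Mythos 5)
Your proof is correct and follows essentially the same route that \cite[Cor.~3.4]{Canuto2017} takes, to which the paper simply defers ("proofs are omitted for brevity but follow analogously"): cancellation of the edge terms, orthogonality of $f-\Pi^{\hT}_{p_\hT}f$ to constants, Poincar\'e--Wirtinger on $\hT$ plus the patch Poincar\'e/Friedrichs inequality on $H^1_*(\omega_a)$, and an $\ell^2$-reverse-triangle assembly with finite overlap. Your closing remark about the modified $H^1_*(\omega_a)$ for boundary vertices is exactly the point the paper's own Remark in \S2 flags, and it is handled correctly since the edges in $\E^{\ext,D}_a$ have length comparable to $h_{\omega_a}$ under the stated shape regularity.
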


\subsection{A theoretical AFEM}
We envision an abstract adaptive FEM that loops
\[
  \text{SOLVE -- ESTIMATE -- MARK -- REFINE},
\]
driven by the vertex-based a posteriori error indicators $\norm{\dr_a}_{H^1_*(\omega_a)'}$. The following result provides sufficient conditions for $p$-robust contraction of the error in energy norm. This AFEM can serve as an efficient error reducer in an instance-optimal $hp$-AFEM through a coarsening step; cf~\cite{Binev2015}.
\begin{prop}[{Contraction of AFEM~\cite[Prop.~4.1]{Canuto2017}}]
  Let $\theta \in (0,1]$ and $\rho \in [1,\infty)$ be constants. Suppose that for some $\lambda \in (0, \tfrac{\theta}{C_{\osc} \rho})$, we have
  \begin{enumerate}[(a)]
    \item small data oscillation: 
      \[
        \osc(f, \T) \leq \lambda \sqrt{\sum_{a \in \V_\T} \norm{\dr_a}_{H^1_*(\omega_a)'}^2},
      \]
    \item D\"orfler marking: a set $\mathcal M \subset \V_\T$ of marked vertices satisfying
      \[
        \sqrt{\sum_{a \in \mathcal M} \norm{\dr_a}_{H^1_*(\omega_a)'}^2} \geq \theta \sqrt{\sum_{a \in \V_{\T}} \norm{\dr_a}_{H^1_*(\omega_a)'}^2},
      \]
    \item local saturation: a closed subspace $\overline U \supset U_\T$ of $H^1_0(\Omega)$ that \emph{saturates} each residual dual norm:
      \[
        \norm{\dr_a}_{H^1_*(\omega_a)'} \leq \rho \norm{\dr_a}_{[H^1_*(\omega_a) \cap \overline U|_{\omega_a}]'} \quad (a \in \mathcal M).
      \]
  \end{enumerate}
  Then, with $\overline u \in \overline U$ the Galerkin approximation of the solution $u$ of~\eqref{eqn:poisson}, we have contraction,
  \[
    \seminorm{u - \overline u}_\Omega \leq \alpha \seminorm{u - u_\T}_\Omega, \quad \text{where} \quad \alpha = \alpha(\theta, \rho, \lambda) := \sqrt{1 - \left(\frac{\theta - C_{\osc} \lambda\rho}{3 C_{\eff}(1 + C_{\osc} \lambda)\rho}\right)^2},
  \]
  meaning that the error is reduced by a factor $\alpha$, uniformly bounded away from 1.
\end{prop}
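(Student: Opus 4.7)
The plan is to exploit the Pythagorean identity
\[
  \seminorm{u - \overline u}_\Omega^2 = \seminorm{u - u_\T}_\Omega^2 - \seminorm{\overline u - u_\T}_\Omega^2,
\]
valid because $U_\T \subset \overline U$ and $\overline u$ is the $H^1_0(\Omega)$-Galerkin projection of $u$ onto $\overline U$. It suffices to establish a lower bound $\seminorm{\overline u - u_\T}_\Omega \geq \beta \seminorm{u - u_\T}_\Omega$ for
\[
  \beta = \beta(\theta,\rho,\lambda) = \frac{\theta - C_{\osc}\lambda\rho}{3 C_{\eff}(1 + C_{\osc}\lambda)\rho},
\]
so that contraction holds with $\alpha = \sqrt{1 - \beta^2}$. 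The hypothesis $\lambda < \theta/(C_{\osc}\rho)$ is precisely what makes $\beta$ positive.

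The bound is assembled by chaining the three hypotheses with Propositions~\ref{prop:eff} and~\ref{prop:discrep}. From reliability one has $\seminorm{u-u_\T}_\Omega \leq \sqrt{3}\bigl(\sum_{a \in \V_\T}\norm{r_a}_{H^1_*(\omega_a)'}^2\bigr)^{1/2}$. Replacing $r_a$ by $\dr_a$ through Proposition~\ref{prop:discrep} and absorbing $\osc(f,\T)$ via (a) costs a factor $(1+C_{\osc}\lambda)$; D\"orfler marking (b) restricts the sum to $\mathcal M$ at cost $\theta^{-1}$; local saturation (c) replaces each dual norm on $H^1_*(\omega_a)$ by one on $H^1_*(\omega_a) \cap \overline U|_{\omega_a}$ at cost $\rho$. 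Running the residual discrepancy backwards once more exchanges $\dr_a$ for $r_a$ inside the restricted dual norm; the oscillation term arising here is again controlled by (a) and produces a factor $(1 - \theta^{-1}\rho C_{\osc}\lambda)$ on the left after rearrangement, i.e.\ a factor $\rho/(\theta - \rho C_{\osc}\lambda)$ on the right.

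The crucial final input is a subspace version of the efficiency half of Proposition~\ref{prop:eff}: for each $a \in \mathcal M$,
\[
  \norm{r_a}_{[H^1_*(\omega_a) \cap \overline U|_{\omega_a}]'} \leq C_{\eff} \seminorm{\overline u - u_\T}_{\omega_a}.
\]
For any test function $v$ in that space, the extension of $\psi_a v$ by zero lies in $H^1_0(\Omega)$ because $\psi_a$ vanishes on $\partial\omega_a$; moreover $\psi_a v$ belongs to $\overline U$ thanks to $\psi_a \in U_\T \subset \overline U$ and the finite-element structure of $\overline U$ on the refined patch $\hta$. Galerkin orthogonality for $\overline u$ then gives $r_a(v) = r_\T(\psi_a v) = \honeinp{\overline u - u_\T}{\psi_a v}{\Omega}$, and Cauchy--Schwarz combined with the stability estimate $\seminorm{\psi_a v}_{\omega_a} \lesssim \seminorm{v}_{\omega_a}$ yields the desired bound. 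The bounded overlap of patches (each point of $\Omega$ belongs to at most $4$ patches) turns $\sum_{a \in \mathcal M}\seminorm{\overline u - u_\T}_{\omega_a}^2$ into $\seminorm{\overline u - u_\T}_\Omega^2$ up to an absolute constant, which I absorb into $C_{\eff}$.

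The main obstacle I anticipate is the membership $\psi_a v \in \overline U$: the proof hinges on a compatibility between the abstract closed subspace $\overline U$ and the patchwise spaces appearing in the saturation hypothesis. If that membership is not structurally automatic, one falls back on proving the subspace efficiency estimate directly, in the spirit of \cite[Prop.~3.1]{Canuto2017}, by testing with localized bubble functions adapted to $\overline U|_{\omega_a}$. Multiplying the $\sqrt{3}$ from reliability by a $\sqrt{3}$ arising from patch overlap in the efficiency step yields the factor $3$ in the denominator of $\beta$, and collecting the remaining constants produces exactly the stated $\alpha(\theta,\rho,\lambda)$.
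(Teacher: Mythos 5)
Since the paper cites this proposition from \cite[Prop.~4.1]{Canuto2017} and explicitly omits its proof (see the preamble to \S\ref{sec:reduce}), there is no in-paper argument to compare against. Your overall strategy --- the Pythagorean identity $\seminorm{u-\overline u}_\Omega^2 = \seminorm{u-u_\T}_\Omega^2 - \seminorm{\overline u - u_\T}_\Omega^2$ followed by a lower bound on $\seminorm{\overline u - u_\T}_\Omega$ obtained by chaining Proposition~\ref{prop:eff}, Proposition~\ref{prop:discrep}, the D\"orfler property, and local saturation --- is exactly the standard route for this type of result, and the constant bookkeeping you sketch (the factor $3C_{\eff}$ from $\sqrt{3}$ reliability times $\sqrt{3}$ patch overlap, and the role of $\lambda < \theta/(C_{\osc}\rho)$ in making the rearranged prefactor positive) is consistent with the stated $\alpha$. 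One small remark: when you ``run the residual discrepancy backwards'' inside the restricted dual norm you are using Proposition~\ref{prop:discrep} in a strengthened per-patch, subspace-uniform form; this is legitimate because $\dr_a - r_a$ is controlled by local oscillation uniformly over all closed subspaces of $H^1_*(\omega_a)$, but it should be said.

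The step you do not actually close --- and you flag it yourself --- is the subspace efficiency bound $\norm{r_a}_{[H^1_*(\omega_a)\cap\overline U|_{\omega_a}]'} \leq C_{\eff}\seminorm{\overline u - u_\T}_{\omega_a}$, and the justification you offer is not correct as written. Galerkin orthogonality for $\overline u$ gives $r_a(v) = \honeinp{\overline u - u_\T}{\psi_a v}{\Omega}$ only if $\psi_a v \in \overline U$, and this does \emph{not} follow from ``$\psi_a \in U_\T \subset \overline U$ and the finite-element structure of $\overline U$''. In the proposition $\overline U$ is merely a closed subspace of $H^1_0(\Omega)$, not a product-closed algebra; and even in the concrete case $\overline U = U_{\check\T}$ of the following remark, multiplying $v$ by $\psi_a \in C(\overline\Omega)\cap\Q_1^{-1}(\T)$ raises the coordinate-wise degree by one on each rectangle, so a $v$ of the maximal degree available in $\overline U|_{\omega_a}$ is sent to a $\psi_a v$ that is one degree too high to lie in $\overline U$. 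Closing this requires either an explicit structural hypothesis that $\psi_a\cdot\bigl(H^1_*(\omega_a)\cap\overline U|_{\omega_a}\bigr)\subset\overline U$ for marked $a$ (which can be arranged by one extra unit of $p$-enrichment in REFINE), or a different proof of the subspace efficiency estimate that avoids the pointwise product; your appeal to ``bubble functions adapted to $\overline U|_{\omega_a}$'' is a placeholder, not an argument. This membership question is the genuinely non-routine ingredient of the proposition, and while you correctly identify it as the obstacle, the proposal leaves it open.
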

\begin{remark}
  Assumption (a) is usually satisfied~\cite[Rem.~4.2]{Canuto2017}, and the D\"orfler marking for (b) can be constructed by ordering vertices by $\norm{\dr_a}_{H^1_*(\omega_a)'}$, so we will focus on (c).

  Given a function $q: \N \to \N$ such that
  \[
    \norm{\dr_a}_{H^1_*(\omega_a)'} \leq \rho \norm{\dr_a}_{[H^1_*(\omega_a) \cap \Q_{q(p_a+1)+1}^{-1}(\hta)]'} \quad (a \in \mathcal M),
  \]
  then, (c) is satisfied for any $U_\T \subset \overline U \subset H^1_0(\Omega)$ with
  \[
    H^1_*(\omega_a) \cap \Q^{-1}_{q(p_a+1)+1}(\hta) \subset H^1_*(\omega_a) \cap \overline U|_{\omega_a} \quad (a \in \mathcal M).
  \]

  In Theorem~\ref{thm:reduction} below, we reduce existence of $q$ to a small number of saturation problems on the reference square.
  Under this assumption, $\overline U$ can then be constructed as $U_{\check \T}$, where $\check \T$ is found through the following REFINE step:
  \begin{enumerate}[(i)]
    \item for each $a \in \mathcal M$, replace $\T_a$ by its smallest regular red-refinement $\hta$;
    \item for each $a \in \mathcal M$, for each $\hT \in \hta$, increase $p_\hT$ to $q(p_a + 1) + 1$;
    \item Take $\check \T$ as the smallest 1-irregular red-refinement of the resulting quadrangulation.
  \end{enumerate}

  The numerical results of \S\ref{sec:numres} suggest that the aforementioned reference problems are solved for $q(p) := p + \lceil \lambda p \rceil$ for any $\lambda > 0$. Each REFINE step multiplies the number of elements by not more than a factor $4$, and the local degrees by (up to) a constant factor $1 + \lceil \lambda \rceil$. Therefore, the dimension of the local finite element space is multiplied by not more than a factor $4(1 + \lceil \lambda \rceil)^2$; since the number of REFINE steps necessary for a fixed error reduction factor $\delta \in (0,1)$ is bounded by $M \leq \lceil \tfrac{\log \delta}{\log \alpha} \rceil$, this leads to an efficient error reducer.
\end{remark}

\subsection{Equivalent computable error quantities}
The localized discrete residuals $\dr_a$ provide, through their dual norms $\norm{\dr_a}_{H^1_*(\omega_a)'}$, reliable and efficient error indicators which can drive an AFEM. These dual norms are, however, not computable.

  For $p \geq 0$ and a rectangle $T$, the \emph{Raviart-Thomas space} of degree $p$ is defined as
  \[
    \RT_p(T) := \Q_{p+1,p}(T) \times \Q_{p,p+1}(T) \subset \vec H(\div; T).
  \]
The following two results underline the importance of this space for $p$-robust analysis.
\begin{lemma}[{$p$-robust inverse of divergence~\cite[Thm.~5]{Braess2009b}}]
  \label{lem:inverse}
  Let $T$ be a rectangle.  For $\varphi \in \Q_p(T)$, there is a $\vec \sigma \in \RT_p(T)$ with
  \[
    \div \vec \sigma = \varphi, \quad \norm{\vec \sigma}_{T} \lesssim \norm{\ltwoinp{\varphi}{\cdot}{T}}_{H^1_0(T)'}.
  \]
\end{lemma}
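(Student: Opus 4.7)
The plan is to reduce the estimate to the reference square $\hat T := (-1,1)^2$, recast the claim as a discrete inf-sup condition, and then construct the desired $\vec \sigma$ explicitly using the tensor-product structure of $\hat T$ and the Legendre basis. First, by an affine change of coordinates combined with the contravariant Piola transformation, I would transfer the problem on $T$ to an equivalent one on $\hat T$. The Piola transform maps $\RT_p(T)$ bijectively onto $\RT_p(\hat T)$ and intertwines divergence with its reference counterpart; together with the way $L_2$-norms of vector fields and $H^1_0$-dual norms of scalars rescale, the estimate on $T$ follows from the analogous one on $\hat T$ with a constant depending only on shape regularity (hence uniform over rectangles of $\mathbb T$).

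On $\hat T$, the estimate to prove is dual to a uniform inf-sup condition on the Raviart-Thomas mixed pair $(\RT_p(\hat T), \Q_p(\hat T))$. The ``easy'' direction $\norm{\ltwoinp{\varphi}{\cdot}{\hat T}}_{H^1_0(\hat T)'} \leq \norm{\vec\sigma}_{\hat T}$ is immediate: for $v \in H^1_0(\hat T)$, integration by parts gives $\ltwoinp{\varphi}{v}{\hat T} = \ltwoinp{\div\vec\sigma}{v}{\hat T} = -\ltwoinp{\vec\sigma}{\nabla v}{\hat T}$, and Cauchy-Schwarz closes the argument. The substantive task is to construct a particular $\vec\sigma \in \RT_p(\hat T)$ satisfying the converse bound with a constant that does not depend on $p$.

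To this end, I would exploit the tensor-product structure. Expand $\varphi$ in the Legendre basis $\{L_j(x)L_k(y)\}_{j,k\leq p}$ and split it into (i) the part whose $x$-slice-averages vanish, handled by setting $\sigma_2 = 0$ and letting $\sigma_1$ be a polynomial $x$-antiderivative vanishing at $x=\pm 1$; and (ii) the remainder that depends only on $y$, handled by the symmetric construction with $\sigma_1 = 0$. The polynomial degrees land exactly in $\Q_{p+1,p}\times\Q_{p,p+1}$, so $\vec\sigma \in \RT_p(\hat T)$. The workhorse is a one-dimensional lemma: for a polynomial $g$ of degree $\leq p$ on $(-1,1)$ with zero mean, the unique polynomial antiderivative $\sigma$ of degree $\leq p+1$ vanishing at $\pm 1$ satisfies $\norm{\sigma}_{L_2(-1,1)} \lesssim \norm{g}_{H^{-1}(-1,1)}$, uniformly in $p$. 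This 1D estimate is proved by inserting the closed-form relation $\int L_k = (L_{k+1}-L_{k-1})/(2k+1)$, reading off the Legendre coefficients of $\sigma$ from those of $g$, and balancing the gained factor $1/(2k+1)$ against the weights inherent in $\norm{g}_{H^{-1}}$. Summation over the remaining Legendre mode, together with the tensor-product characterization of $H^{-1}(\hat T)$, assembles the full reference estimate.

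The main obstacle is precisely the $p$-robustness of the 1D lemma: a naive Poincar\'e-style bound $\norm{\sigma}_{L_2}\lesssim\norm{g}_{L_2}$ degrades once $\norm{g}_{L_2}$ is replaced by the weaker $\norm{g}_{H^{-1}}$, and the loss can grow like $p$. The gain comes from the exact cancellation between the denominator $2k+1$ produced by Legendre antidifferentiation and the $H^{-1}$-weights, or equivalently from the existence of a stable bounded commuting projector in the spectral-element de Rham complex on $\hat T$; securing this cancellation is the crux of the argument. Handling boundary conditions induced by the tensor splitting and the transfer back to $T$ are routine once the 1D lemma is in hand.
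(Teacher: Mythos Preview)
The paper does not give its own proof of this lemma; it is quoted verbatim from \cite[Thm.~5]{Braess2009b}. Your proposal, however, contains a genuine gap.

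The crux is your ``workhorse'' one-dimensional lemma: for $g\in\P_p(-1,1)$ with zero mean and $\sigma$ its antiderivative with $\sigma(\pm1)=0$, you claim $\norm{\sigma}_{L_2}\lesssim\norm{g}_{H^{-1}}$ uniformly in $p$. This is false. Since $\sigma(\pm1)=0$, integration by parts shows that $\norm{g}_{H^{-1}(-1,1)}=\norm{\sigma-\bar\sigma}_{L_2}$, where $\bar\sigma$ is the mean of $\sigma$; your lemma is therefore equivalent to $\norm{\sigma}_{L_2}\lesssim\norm{\sigma-\bar\sigma}_{L_2}$ on $\{\sigma\in\P_{p+1}:\sigma(\pm1)=0\}$, uniformly in $p$. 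Take $\sigma(x)=1-x^{2n}$. Then $\bar\sigma=1-\tfrac{1}{2n+1}$, $\norm{\sigma}_{L_2}^2\to 2$, while $\norm{\sigma-\bar\sigma}_{L_2}^2=\tfrac{2}{4n+1}-\tfrac{2}{(2n+1)^2}\sim\tfrac{1}{2n}$, so the ratio grows like $\sqrt{n}\sim\sqrt{p}$. The factor $1/(2k+1)$ from Legendre antidifferentiation does \emph{not} balance the $H^{-1}$ weights: the obstruction is exactly the $L_0$-coefficient of $\sigma$, which is invisible to $\norm{g}_{H^{-1}}$.

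The failure carries over to two dimensions. With $\varphi(x,y)=g(x)=-2n\,x^{2n-1}$ (zero $x$-slice averages, hence entirely in your part~(i)), your construction gives $\vec\sigma=(1-x^{2n},0)$ with $\norm{\vec\sigma}_{\refsq}\eqsim 1$, whereas testing slice-by-slice yields $\norm{\ltwoinp{\varphi}{\cdot}{\refsq}}_{H^1_0(\refsq)'}\le\sqrt{2}\,\norm{g}_{H^{-1}(-1,1)}\eqsim n^{-1/2}$. Your $\vec\sigma$ therefore overshoots the target bound by $\sqrt{p}$. The argument in \cite{Braess2009b} avoids the naive antiderivative and instead builds a $p$-robust right inverse of the divergence via polynomial-preserving, commuting projection onto $\RT_p$ (in the spirit of the operators behind Lemma~\ref{lem:rtext}); this necessarily uses \emph{both} components of the flux so that the $y$-direction can absorb the constant-in-$x$ part that your splitting forces into $\sigma_1$ alone.
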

\begin{lemma}[{$p$-robust Raviart-Thomas extension~\cite[Cor.~3.4]{Costabel2010}}]
  \label{lem:rtext}
  Let $T$ be a rectangle with edges $\set{e_1, e_2, e_3, e_4}$.  Given $\varphi \in L_2(\partial T)$ such that
  \[
    \ltwoinp[s]{\varphi}{\bbone}{\partial T} = 0, \quad \text{and} \quad \varphi|_{e_i} \in \P_p(e_i) \quad (i \in \set{1,\ldots, 4})
  \]
  then there is a $\vec \sigma \in \RT_p(T)$ with
  \[
    \div \vec \sigma = 0, \quad \vec \sigma \cdot \vec n_{T} = \varphi~~\text{on $\partial T$}, \quad \norm{\vec \sigma}_{T} \lesssim \inf_{\set{\vec \tau \in \vec H(\div; T): \div \vec \tau = 0, \vec \tau \cdot \vec n_{T} = \varphi}} \norm{\vec \tau}_{T}.
  \]
\end{lemma}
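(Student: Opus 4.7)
The plan is to exploit that $T$ is simply connected, so any divergence-free field in two dimensions is a rotated gradient $\nabla^\perp \psi := (\partial_2 \psi, -\partial_1 \psi)$ of a scalar stream function, reducing the construction to a polynomial extension problem for $\psi$. For any admissible $\vec\tau \in \vec H(\div; T)$ with $\div \vec\tau = 0$ and $\vec\tau \cdot \vec n_T = \varphi$, write $\vec\tau = \nabla^\perp \psi$ for some $\psi \in H^1(T)/\R$; then $\norm{\nabla \psi}_T = \norm{\vec\tau}_T$, and the tangential derivative $\partial_t \psi$ on $\partial T$ agrees with $\varphi$ (up to the sign convention fixed by the orientation of $\vec n_T$). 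The compatibility condition $\ltwoinp{\varphi}{\bbone}{\partial T} = 0$ ensures that integrating $\varphi$ along $\partial T$ from a chosen base point yields a continuous, single-valued trace $g := \psi|_{\partial T}$ (defined modulo a constant), and since $\varphi|_{e_i} \in \P_p(e_i)$ we have $g|_{e_i} \in \P_{p+1}(e_i)$ for each edge $e_i$.

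Next, invoke a $p$-robust polynomial-preserving lifting $E$ that extends continuous piecewise-$\P_{p+1}$ boundary data on $\partial T$ into $\Q_{p+1,p+1}(T)$ while satisfying
\[
\norm{\nabla E g}_T \lesssim \norm{g}_{H^{1/2}(\partial T)}.
\]
Set $\psi_p := E g$ and $\vec\sigma := \nabla^\perp \psi_p$. Then $\div \vec\sigma = 0$ holds automatically, and the normal trace reads $\vec\sigma \cdot \vec n_T = \partial_t \psi_p = \partial_t g = \varphi$ on $\partial T$. Since $\psi_p \in \Q_{p+1,p+1}(T)$, its partials lie in $\Q_{p+1,p}(T)$ and $\Q_{p,p+1}(T)$ respectively, so $\vec\sigma \in \RT_p(T)$. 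Combining the lifting bound with the standard trace estimate $\norm{g}_{H^{1/2}(\partial T)} \lesssim \norm{\nabla \psi}_T = \norm{\vec\tau}_T$ and taking the infimum over admissible $\vec\tau$ yields the desired $\norm{\vec\sigma}_T \lesssim \inf_{\vec\tau} \norm{\vec\tau}_T$.

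\emph{Main obstacle.} The essential difficulty lies in the $p$-robust polynomial extension $E$: given continuous piecewise-$\P_{p+1}$ boundary data on a square, construct a $\Q_{p+1,p+1}$ lifting with constants independent of $p$. A natural strategy is to treat each edge separately using a $p$-robust one-dimensional $H^{1/2}$-extension (based for instance on Jacobi-weighted projections), and then correct the overlapping contributions at the four corners so that the resulting polynomial interpolates $g$ exactly. Obtaining $p$-independent bounds on these corner corrections --- and in particular handling the interaction between adjacent edges at each vertex --- is the nontrivial content supplied by \cite{Costabel2010}, which is the result we invoke rather than reprove here.
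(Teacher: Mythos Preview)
The paper does not supply its own proof of this lemma; it is quoted verbatim as \cite[Cor.~3.4]{Costabel2010}, whose regularized Poincar\'e operators give polynomial-preserving, $p$-robust right inverses in the de~Rham complex on Lipschitz domains in any dimension, yielding the $\RT_p$ extension directly.

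Your stream-function reduction is a correct and natural two-dimensional alternative. The identification $\vec\tau = \nabla^\perp\psi$, the recovery of $g = \psi|_{\partial T}$ by integrating $\varphi$ (the compatibility $\ltwoinp{\varphi}{\bbone}{\partial T}=0$ being precisely what makes $g$ single-valued), the degree count $\nabla^\perp \Q_{p+1,p+1}(T) \subset \RT_p(T)$, and the chain $\norm{\vec\sigma}_T = \seminorm{Eg}_T \lesssim \norm{g}_{H^{1/2}(\partial T)/\R} \lesssim \seminorm{\psi}_T = \norm{\vec\tau}_T$ are all sound. The one point to tighten is the attribution at the end: the $p$-robust polynomial $H^{1/2}(\partial T)\to H^1(T)$ trace lifting you invoke is not what \cite[Cor.~3.4]{Costabel2010} states---that result concerns right inverses of the exterior derivative on the domain, not boundary-to-interior liftings. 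The lifting you need is available (e.g.\ Babu\v{s}ka--Suri, Mu\~noz-Sola, or the projection-based interpolants of Demkowicz--Gopalakrishnan--Sch\"oberl), so the argument closes, but cite one of those instead. Your route makes the 2D mechanism transparent and elementary; the Costabel--McIntosh route buys dimension-independence and avoids the stream-function detour at the cost of heavier machinery.
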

In~\cite{Dolejsi2016}, Dolej\v{s}\'i \emph{et al.}~use these two lemmas to find a Raviart-Thomas flux $\vec \sigma_a \in \prod_{\hT \in \hta} \RT_{p_a}(\hT)$ with $p$-robust norm equivalence $\norm{\vec \sigma_a}_{\omega_a} \eqsim \norm{\dr_a}_{H^1_*(\omega_a)'}$, and present an efficient algorithm for its construction. The error indicators $\norm{\vec \sigma_a}_{\omega_a}$ \emph{can} be computed, and can therefore drive an AFEM.

\section{Reducing local saturation problem to reference saturation problems}
In this section, we prove the main theorem of the present work, reducing the local $p$-robust saturation problem to a small number of saturation problems on the reference square.

\subsection{Saturation coefficients}
  Let $\refsq := [-1,1]^2$ be the reference square. Given
    a closed linear subspace $\hat{\sp H} \subset H^1(\refsq)$ on which the $H^1(\refsq)$-seminorm is a norm,
    a finite-dimensional linear subspace $\hat{\sp V} \subset \hat{\sp H}$, and
    a set of functionals $\hat{\sp F} \subset \hat{\sp H}'$,
  define the \emph{saturation coefficient}
\[
  S(\hat{\sp H}, \hat{\sp V}, \hat{ \sp F}) := \sup_{\hat F \in \hat{\sp F}} \frac{\norm{\hat F}_{\hat{\sp H}'}}{\norm{\hat F}_{\hat{\sp V}'}}
\]
  which, if bounded, shows that $\hat{\sp V}$ is a large enough subspace to \emph{saturate} $\hat{\sp H}$ over the set $\hat{\sp F}$.

\begin{lemma}[Saturation extends to rectangles]
  \label{lem:refsat}
  When $\T \in \mathbb T$, then for any $T \in \T$,
  \[
    \sup_{F \in \sp F} \frac{\norm{F}_{\sp H'}}{\norm{F}_{\sp V'}} \lesssim \kappa_2(\mat B) S(\hat{\sp H}, \hat{\sp V}, \hat{ \sp F})
  \]
  where $F_T(\vec x) := \mat B \vec x + \vec b$ is an affine mapping from $T$ to $\refsq$, and $\sp H, \sp V, \sp F$ are determined by the pull-back, pull-back, and push-forward, respectively (cf.~\cite[p.82]{Brenner2008}).

  In words, saturation on the reference square extends to uniformly shape regular rectangles.
\end{lemma}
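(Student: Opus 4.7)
My plan is a direct change-of-variables argument: push everything to the reference square via the affine bijection $F_T:T\to\refsq$, and then watch the scaling factors cancel in the ratio.

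First I would set up compatible transport of the three ingredients. The map $v\mapsto\hat v:=v\circ F_T^{-1}$ is a linear bijection $\sp H\to\hat{\sp H}$ and $\sp V\to\hat{\sp V}$, and by the very definition of the push-forward each $F\in\sp F$ corresponds to a unique $\hat F\in\hat{\sp F}$ with $F(v)=\hat F(\hat v)$. I would also check that $\norm{\nabla\cdot}_T$ is actually a norm on $\sp H$, so that the left-hand dual norm in the conclusion is well defined; this follows at once from the scaling identity below together with the hypothesis that $\norm{\nabla\cdot}_{\refsq}$ is a norm on $\hat{\sp H}$.

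The technical heart is the standard affine scaling estimate for gradient norms. Combining the chain rule $\nabla v(x)=\mat B^\top(\nabla\hat v)(F_T(x))$ with the substitution $dx=|\det\mat B|^{-1}\,d\hat x$ yields, for every $v\in H^1(T)$,
\[
  \norm{\mat B^{-1}}^{-1}\,|\det\mat B|^{-1/2}\,\norm{\nabla\hat v}_{\refsq}\;\leq\;\norm{\nabla v}_T\;\leq\;\norm{\mat B}\,|\det\mat B|^{-1/2}\,\norm{\nabla\hat v}_{\refsq}.
\]
Dualising this equivalence (separately on $\sp H$ and on $\sp V$) via $\norm{F}_{\sp H'}=\sup_{\hat v\in\hat{\sp H}\setminus\{0\}}|\hat F(\hat v)|/\norm{\nabla v}_T$ gives
\[
  \norm{\mat B}^{-1}\,|\det\mat B|^{1/2}\,\norm{\hat F}_{\hat{\sp H}'}\;\leq\;\norm{F}_{\sp H'}\;\leq\;\norm{\mat B^{-1}}\,|\det\mat B|^{1/2}\,\norm{\hat F}_{\hat{\sp H}'},
\]
and analogously for the $\sp V'/\hat{\sp V}'$ pair.

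To finish, I would divide the upper bound on $\norm{F}_{\sp H'}$ by the lower bound on $\norm{F}_{\sp V'}$; the factors $|\det\mat B|^{1/2}$ cancel exactly, and what survives is $\norm{\mat B}\cdot\norm{\mat B^{-1}}=\kappa_2(\mat B)$, so that
\[
  \frac{\norm{F}_{\sp H'}}{\norm{F}_{\sp V'}}\;\leq\;\kappa_2(\mat B)\,\frac{\norm{\hat F}_{\hat{\sp H}'}}{\norm{\hat F}_{\hat{\sp V}'}}.
\]
Taking the supremum over $F\in\sp F$, equivalently over $\hat F\in\hat{\sp F}$, yields the claim, in fact with implicit constant $1$. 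I do not foresee a real obstacle: the argument is essentially bookkeeping of scaling factors, and the only mildly subtle points — that the pull-back inherits the hypothesis that the seminorm is a norm, and that the push-forward of functionals indeed acts on the pulled-back space — fall out directly from the two-sided gradient equivalence.
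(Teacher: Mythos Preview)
Your argument is correct and is exactly the standard affine change-of-variables computation the paper is pointing to with its citation of Brenner--Scott; in fact the paper does not supply its own proof of this lemma, so there is nothing to compare against beyond noting that your approach is the intended one and even yields the bound with implicit constant~$1$.
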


\subsection{Enumerating the interior edges of a refined patch}
\label{sub:edgetrav}
Refined patches will play an integral role in the proof of the forthcoming Theorem. Take $a \in \V_\T$, and let $\hta$ be its refined patch. We will construct an enumeration of the interior edges $\hE^{\irr}_a$ of $\hta$ as $(\he_i)_{i=1}^{n_a}$, where $n_a := \# \hE^{\irr}_a$, and for each interior edge, choose a specific square $\hT_i \in \hta$ adjacent to $\he_i$.

  Because every patch $\T_a$ is a 1-irregular collection of axis-aligned rectangles, there is only a finite number of different refined patch types. In fact, it can be shown that up to rotation/flipping of $\hta$, all patches fall in one of the 13 types on the right of Figure~\ref{fig:edgetrav}.
  
  Overlay the vertex $a$ with the $\hat a$ in the $4 \times 4$ grid to the left of Figure~\ref{fig:edgetrav}. Then every $\he \in \hE^{\irr}_a$ inherits a number $1 \leq k(i) \leq 24$ from the grid. We then enumerate $(\he_i)_{i=1}^{n_a}$ in increasing order of the values $k(i)$, and we choose $\hT_i$ as the square above or to the left of $\he_i$ (whichever is applicable).

\begin{figure}[t!]
  \begin{quote}
\begin{minipage}[t][5.1cm]{5.1cm}
  \begin{tikzpicture}[scale=1.25]
    \draw[step=1cm,draw] (0,0) grid (4,4);
    \node[above] at (3.5,1){$1$};
    \node[above] at (3.5,2){$2$};
    \node[above] at (3.5,3){$3$};
    \node[left] at (3,0.5){$4$};
    \node[left] at (2,0.5){$5$};
    \node[left] at (1,0.5){$6$};

    \node[left] at (3,1.5){$7$};
    \node[left] at (3,2.5){$8$};
    \node[left] at (3,3.5){$9$};
    \node[above] at (2.5,1){$10$};
    \node[above] at (1.5,1){$11$};
    \node[above] at (0.5,1){$12$};

    \node[above] at (2.5,2){$13$};
    \node[above] at (2.5,3){$14$};
    \node[left] at (2,1.5){$15$};
    \node[left] at (1,1.5){$16$};
    \node[left] at (2,2.5){$17$};
    \node[left] at (2,3.5){$18$};

    \node[above] at (1.5,2){$19$};
    \node[above] at (0.5,2){$20$};
    \node[above] at (1.5,3){$21$};
    \node[left] at (1,2.5){$22$};
    \node[left] at (1,3.5){$23$};
    \node[above] at (0.5,3){$24$};

    \node[below right] at (2,2){$\hat a$};
    \node at (2,2){$\bullet$};

    \draw[line width=1mm] (0,0) -- (4,0) -- (4,4) -- (0,4) -- (0,0) -- (4,0);
    \draw[line width=0.5mm,white] (0,0) -- (4,0) -- (4,4) -- (0,4) -- (0,0) -- (4,0);
  \end{tikzpicture}
\end{minipage}
\begin{minipage}[b][5.1cm]{\linewidth - 5.1cm}
  \begin{tikzpicture}[scale=0.45]
    \draw (0,2) -- (4,2) (2,0) -- (2,4);

    \draw (0,1) -- (2,1) (1,0) -- (1,2);
    \draw[dashed] (0,3) -- (2,3) (1,2) -- (1,4);
    \draw[dashed] (2,3) -- (4,3) (3,2) -- (3,4);
    \draw[dashed] (2,1) -- (4,1) (3,0) -- (3,2);

    \node[below right] at (2,2){$a$};
    \node at (2,2){$\bullet$};

    \draw[line width=1mm] (1,0) -- (4,0) -- (4,4) -- (0,4) -- (0,1) -- (1,1) -- (1,0) -- (4,0);
    \draw[line width=0.5mm,white] (1,0) -- (4,0) -- (4,4) -- (0,4) -- (0,1) -- (1,1) -- (1,0) -- (4,0);
  \end{tikzpicture}%
\hspace{0.3em}%
  \begin{tikzpicture}[scale=0.45]
    \draw (0,2) -- (4,2) (2,0) -- (2,4);

    \draw (0,1) -- (2,1) (1,0) -- (1,2);
    \draw[dashed] (0,3) -- (2,3) (1,2) -- (1,4);
    \draw (2,3) -- (4,3) (3,2) -- (3,4);
    \draw[dashed] (2,1) -- (4,1) (3,0) -- (3,2);

    \node[below right] at (2,2){$a$};
    \node at (2,2){$\bullet$};

    \draw[line width=1mm] (1,0) -- (4,0) -- (4,3) -- (3,3) -- (3,4) -- (0,4) -- (0,1) -- (1,1) -- (1,0) -- (4,0);
    \draw[line width=0.5mm,white] (1,0) -- (4,0) -- (4,3) -- (3,3) -- (3,4) -- (0,4) -- (0,1) -- (1,1) -- (1,0) -- (4,0);
  \end{tikzpicture}%
\hspace{0.3em}%
  \begin{tikzpicture}[scale=0.45]
    \draw (0,2) -- (3,2) (2,0) -- (2,4);

    \draw[dashed] (0,1) -- (2,1) (1,0) -- (1,2);
    \draw[dashed] (0,3) -- (2,3) (1,2) -- (1,4);
    \draw (2,3) -- (3,3) (3,2) -- (3,4);
    \draw (2,1) -- (3,1) (3,0) -- (3,2);

    \node[below right] at (2,2){$a$};
    \node at (2,2){$\bullet$};

    \draw[line width=1mm] (0,0) -- (3,0) -- (3,4) -- (0,4) -- (0,0) -- (3,0);
    \draw[line width=0.5mm,white] (0,0) -- (3,0) -- (3,4) -- (0,4) -- (0,0) -- (3,0);
  \end{tikzpicture}%
\hspace{0.3em}%
  \begin{tikzpicture}[scale=0.45]
    \draw (0,2) -- (3,2) (2,0) -- (2,3);

    \draw[dashed] (0,1) -- (2,1) (1,0) -- (1,2);
    \draw (0,3) -- (2,3) (1,2) -- (1,3);
    \draw (2,3) -- (3,3) (3,2) -- (3,3);
    \draw (2,1) -- (3,1) (3,0) -- (3,2);

    \node[below right] at (2,2){$a$};
    \node at (2,2){$\bullet$};

    \draw[line width=1mm] (0,0) -- (3,0) -- (3,3) -- (0,3) -- (0,0) -- (3,0);
    \draw[line width=0.5mm,white] (0,0) -- (3,0) -- (3,3) -- (0,3) -- (0,0) -- (3,0);
  \end{tikzpicture}%
\hspace{0.3em}%
  \begin{tikzpicture}[scale=0.45]
    \draw (2,1) -- (2,3) (1,2) -- (3,2);

    \node[below right] at (2,2){$a$};
    \node at (2,2){$\bullet$};

    \draw[line width=1mm] (1,1) -- (1,3) -- (3,3) -- (3,1) -- (1,1) -- (1,3);
    \draw[line width=0.5mm,white] (1,1) -- (1,3) -- (3,3) -- (3,1) -- (1,1) -- (1,3);

    \draw[white, line width=1mm] (2,0) -- (3,0);
  \end{tikzpicture}
  \hfill
  \vfill
  \begin{tikzpicture}[scale=0.45]
    \draw (0,2) -- (4,2) (2,0) -- (2,4);

    \draw[dashed] (0,1) -- (2,1) (1,0) -- (1,2);
    \draw[dashed] (2,3) -- (4,3) (3,2) -- (3,4);
    \draw (2,1) -- (4,1) (3,0) -- (3,2);

    \node[below right] at (2,2){$a$};
    \node at (2,2){$\bullet$};

    \draw[line width=1mm] (0,2) -- (0,0) -- (3,0) -- (3,1) -- (4,1) -- (4,4) -- (2,4) -- (2,2) -- (0,2) -- (0,0);
    \draw[line width=0.5mm,white] (0,2) -- (0,0) -- (3,0) -- (3,1) -- (4,1) -- (4,4) -- (2,4);
  \end{tikzpicture}%
\hspace{0.3em}%
  \begin{tikzpicture}[scale=0.45]
    \draw (1,1) -- (2,1) (2,0) -- (2,2) -- (4,2);
    \draw[dashed] (2,1) -- (4,1) (2,3) -- (4,3) (3,0) -- (3,4);

    \node[below right] at (2,2){$a$};
    \node at (2,2){$\bullet$};

    \draw[line width=1mm] (1,2) -- (1,0) -- (4,0) -- (4,4) -- (2,4) -- (2,2) -- (1,2) -- (1,0);
    \draw[line width=0.5mm,white] (1,2) -- (1,0) -- (4,0) -- (4,4) -- (2,4);
  \end{tikzpicture}%
\hspace{0.3em}%
  \begin{tikzpicture}[scale=0.45]
    \draw (2,0) -- (2,2) -- (3,2) (2,1) -- (3,1);
    \draw[dashed] (1,0) -- (1,2) (0,1) -- (2,1);

    \node[below right] at (2,2){$a$};
    \node at (2,2){$\bullet$};

    \draw[line width=1mm] (0,2) -- (0,0) -- (3,0) -- (3,3) -- (2,3) -- (2,2) -- (0,2) -- (0,0);
    \draw[line width=0.5mm,white] (0,2) -- (0,0) -- (3,0) -- (3,3) -- (2,3);
  \end{tikzpicture}%
\hspace{0.3em}%
  \begin{tikzpicture}[scale=0.45]
    \draw (1,1) -- (2,1) (2,0) -- (2,2) -- (4,2) (3,2) -- (3,3);
    \draw[dashed] (3,0) -- (3,2) (2,1) -- (4,1);

    \node[below right] at (2,2){$a$};
    \node at (2,2){$\bullet$};

    \draw[line width=1mm] (1,2) -- (1,0) -- (4,0) -- (4,3) -- (2,3) -- (2,2) -- (1,2) -- (1,0);
    \draw[line width=0.5mm,white] (1,2) -- (1,0) -- (4,0) -- (4,3) -- (2,3);
  \end{tikzpicture}%
\hspace{0.3em}%
  \begin{tikzpicture}[scale=0.45]
    \draw (2,2) -- (4,2) (3,3) -- (3,2);
    \draw[dashed] (3,0) -- (3,2) (2,1) -- (4,1);

    \node[below right] at (2,2){$a$};
    \node at (2,2){$\bullet$};

    \draw[line width=1mm] (2,0) -- (4,0) -- (4,3) -- (2,3) -- (2,0) -- (4,0);
    \draw[line width=0.5mm,white] (2,0) -- (4,0) -- (4,3) -- (2,3);
  \end{tikzpicture}
  \hfill
  \vfill
  {\color{white}.}
  \hfill
  \begin{tikzpicture}[scale=0.45]
    \draw (2,1) -- (2,2) -- (3,2);

    \node[below right] at (2,2){$a$};
    \node at (2,2){$\bullet$};

    \draw[line width=1mm] (1,2) -- (1,1) -- (3,1) -- (3,3) -- (2,3) -- (2,2) -- (1,2) -- (1,1);
    \draw[line width=0.5mm,white] (1,2) -- (1,1) -- (3,1) -- (3,3) -- (2,3);

  \end{tikzpicture}%
\hspace{0.3em}%
  \begin{tikzpicture}[scale=0.45]
    \draw (2,2) -- (3,2);

    \node[below right] at (2,2){$a$};
    \node at (2,2){$\bullet$};

    \draw[line width=1mm] (2,1) -- (3,1) -- (3,3) -- (2,3) -- (2,1) -- (3,1);
    \draw[line width=0.5mm,white] (2,1) -- (3,1) -- (3,3) -- (2,3);

  \end{tikzpicture}%
\hspace{0.3em}%
  \begin{tikzpicture}[scale=0.45]
    \node[below right] at (2,2){$a$};
    \node at (2,2){$\bullet$};

    \draw[line width=1mm] (2,1) -- (3,1) -- (3,2) -- (2,2) -- (2,1) -- (3,1);
    \draw[line width=0.5mm,white] (2,1) -- (3,1) -- (3,2);

  \end{tikzpicture}
  \hfill
  {\color{white}.}
\end{minipage}
\end{quote}
  \caption{Left: a $4 \times 4$ grid with vertex $\hat a$, and an enumeration of its interior edges. Right: the 13 fundamentally different refined patch types, with the double line indicating Neumann edges $\hE^{\ext,N}_a$ of the patch boundary, and the thick black line Dirichlet edges $\hE^{\ext,D}_a$. We enumerate interior edges of a patch by overlaying its vertex $a$ with $\hat a$ in the left grid, and numbering them in increasing order.}
  \label{fig:edgetrav}
\end{figure}
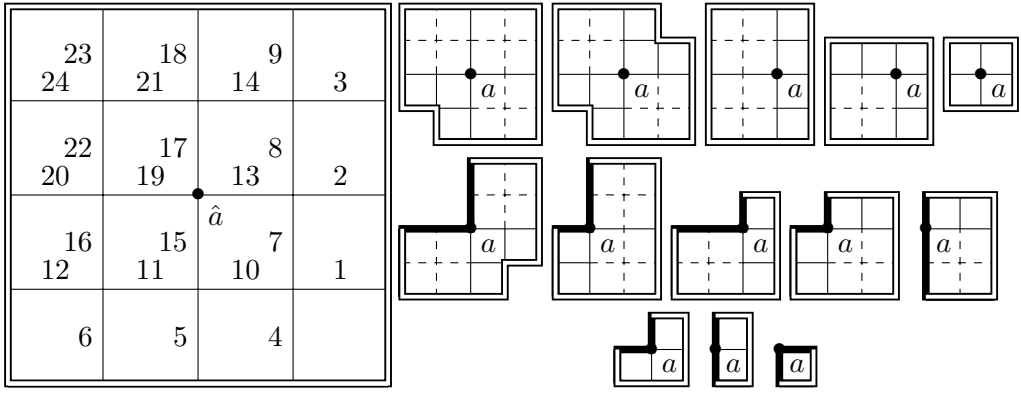

\subsection{Main theorem}
Let $T$ be a rectangle. When $\gamma \subset \partial T$ with $\meas(\gamma) > 0$, the space $H^1_{0, \gamma}(T)$ denotes the closure in $H^1(T)$ of the smooth functions on $\overline T$ that vanish on $\gamma$. By abuse of notation, when $\E = \set{\gamma}$ is a collection of such parts of the boundary, $H^1_{0, \E}(T)$ will denote the closure of smooth functions that vanish on every $\gamma$ separately.

For brevity purposes, write the restriction of $H^1_*(\omega_a)$ to piecewise polynomials as
\[
  \hprime{p} := H^1_*(\omega_a) \cap \Q^{-1}_p(\hta) \quad (p \in \N, ~~ a \in \V_\T).
\]

  We enumerate the edges of the reference square $\refsq$ as \mbox{$\E_{\refsq} = (\hat e_1, \hat e_2, \hat e_3, \hat e_4)$}, in counterclockwise fashion, starting from the rightmost edge.
\begin{theorem}[Reduction of $p$-robust saturation]
  Given the following sets of subsets of $\E_{\refsq}$,
  \begin{equation}
      \mathbb E^{(A)} := \set{\E \subset \E_{\refsq}: \E \not= \emptyset}, \quad
    \mathbb E^{(B)} := \set{ \set{\hat e_2}, \set{\hat e_3}, \set{\hat e_2, \hat e_3}, \set{\hat e_2, \hat e_3, \hat e_4} },
    \label{eqn:Es}
  \end{equation}
  define the following reference saturation coefficients
  \begin{align*}
    S^{(A)}_{\E, p, q} &:= S\left(H^1_{0, \E}(\refsq), ~~ H^1_{0, \E}(\refsq) \cap \Q_q(\refsq), ~~ \{h \mapsto \ltwoinp{\phi}{h}{\refsq} : \phi \in \Q_p(\refsq)\} \right) \quad \left(\E \in \mathbb E^{(A)}\right), \\
    S^{(B)}_{\E, p, q} &:= S\left(H^1_{0, \E}(\refsq), ~~ H^1_{0, \E}(\refsq) \cap \Q_q(\refsq), ~~ \{ h \mapsto \ltwoinp[s]{\phi}{h}{\hat e_1} : \phi \in \P_p(\hat e_1)\} \right) \quad \left(\E \in \mathbb E^{(B)}\right), \\
    S^{(C)}_{p, q} &:= S\left(H^1(\refsq)/\R, ~~ \Q_q(\refsq)/\R, ~~ \{h \mapsto \ltwoinp[s]{\phi}{h}{\hat e_1} : \phi \in \P_p(\hat e_1)/\R\} \right).
  \end{align*}
  If for some function $q: \N \to \N$, it holds that
  \begin{align*}
    \hat S := &\sup_{p} \max \set{S^{(A)}_{\E, p, q(p)}: \E \in \mathbb E^{(A)}} \cup \set{S^{(B)}_{\E, p, q(p)}: \E \in \mathbb E^{(B)}} \cup \set{S^{(C)}_{p, q(p)}} < \infty,
  \end{align*}
  then we have $p$-robust saturation, in that
  \begin{equation}
    \norm{\dr_a}_{H^1_*(\omega_a)'} \lesssim \norm{\dr_a}_{\hprime{q(p_a+1)+1}'}
    \label{eqn:reduction}
  \end{equation}
  dependent on $\hat S$, but independent of the quadrangulation $\T$ and its local degrees.
  \label{thm:reduction}
\end{theorem}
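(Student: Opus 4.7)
The plan is as follows. By a standard duality argument, the bound~\eqref{eqn:reduction} is equivalent to the statement that every $v \in H^1_*(\omega_a)$ admits a discrete counterpart $\tilde v \in \hprime{q(p_a+1)+1}$ with $\dr_a(v)=\dr_a(\tilde v)$ and $\seminorm{\tilde v}_{\omega_a} \lesssim \seminorm{v}_{\omega_a}$, the implicit constant depending only on $\hat S$ and the shape regularity of $\mathbb T$. In view of the definition~\eqref{eqn:discres} of $\dr_a$, matching this functional reduces to a finite list of scalar linear constraints, namely $\ltwoinp{\phi_{\hT}}{v-\tilde v}{\hT}=0$ for every $\hT \in \hta$ and $\ltwoinp[s]{\phi_{\he}}{v-\tilde v}{\he}=0$ for every $\he \in \hE^{\irr}_a$. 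I would build $\tilde v$ in three successive layers, each eliminating one family of constraints without disturbing those killed earlier.

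\emph{Edge layer.} Following the enumeration $(\he_i,\hT_i)_{i=1}^{n_a}$ of Section~\ref{sub:edgetrav}, fix the trace of $\tilde v$ on each interior edge in order. At step $i$, the edges of $\hT_i$ other than $\he_i$ either lie on $\partial \omega_a$, where traces are dictated by $H^1_*(\omega_a)$, or correspond to some $\he_j$ with $j<i$ and are hence frozen. I add a perturbation in $\Q_{q(p_a+1)+1}(\hT_i)$ supported on $\hT_i$, vanishing on all these locked edges, and matching the missing pairing on $\he_i$; pulling back to $\refsq$ via Lemma~\ref{lem:refsat}, existence and $H^1$-stability of the perturbation reduce to boundedness of $S^{(B)}_{\E,p_a+1,q(p_a+1)}$ for the appropriate $\E \in \mathbb E^{(B)}$. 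When $a \in \V_\T^{\irr}$, the mean-zero constraint of $H^1_*(\omega_a)$ removes one scalar degree of freedom and, combined with $\dr_a(1)=0$ (which is Galerkin orthogonality applied to $\psi_a \in U_\T$), is accommodated by invoking $S^{(C)}_{p_a+1,q(p_a+1)}$ for one distinguished edge---typically the first, where no lockings are yet in place so that the reference space is $H^1(\refsq)/\R$.

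\emph{Extension and volume layers.} With all traces $\tilde v|_{\partial \hT}$ now fixed on every $\hT \in \hta$, extend $\tilde v$ into each element as a polynomial in $\Q_{q(p_a+1)+1}$ compatible with those traces, using a $p$-robust polynomial extension in the spirit of~\cite{Costabel2010}. The remaining volume mismatches $\ltwoinp{\phi_{\hT}}{v-\tilde v}{\hT}$ are then killed one element at a time by adding on each $\hT$ a bubble in $\Q_{q(p_a+1)+1}(\hT)\cap H^1_0(\hT)$ that reproduces the required scalar; since the bubble vanishes on $\partial \hT$, continuity and all previously completed corrections are preserved. By Lemma~\ref{lem:refsat}, this last step is controlled by $S^{(A)}_{\E_{\refsq},p_a+1,q(p_a+1)}$ with $\E_{\refsq} \in \mathbb E^{(A)}$.

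\emph{Main obstacle.} The chief technical challenge is the edge layer: one must verify, across all 13 patch types of Figure~\ref{fig:edgetrav}, that the enumeration and choice of $\hT_i$ from Section~\ref{sub:edgetrav} always leave the locked edges of $\hT_i$ in a configuration matching, after identification with $\refsq$, one of the four sets in $\mathbb E^{(B)}$, and that a correction supported on $\hT_i$ never spoils an already completed edge pairing. This is a combinatorial verification that is the real content of the 13-patch figure; 1-irregularity in particular forces one to keep track of parent and child edges separately so that each locked edge of $\hT_i$ corresponds cleanly to one edge of $\refsq$. Tracking $H^1$-stability through all three layers and through the pullbacks of Lemma~\ref{lem:refsat} then requires some bookkeeping but is routine once each local subproblem is set up on $\refsq$ and the uniform bound on $\#\hta$ is used.
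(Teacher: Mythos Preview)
Your primal strategy---building a Fortin-type map $v\mapsto\tilde v$---is not the paper's approach, and as written it has a genuine gap in the stability argument. Consider the volume layer: the minimal-seminorm bubble $b_{\hT}\in H^1_0(\hT)\cap\Q_{q(p_a+1)+1}(\hT)$ matching the scalar $c_{\hT}:=\ltwoinp{\phi_{\hT}}{v-\tilde v}{\hT}$ satisfies $\seminorm{b_{\hT}}_{\hT}=|c_{\hT}|/\norm{\ltwoinp{\phi_{\hT}}{\cdot}{\hT}}_{(H^1_0(\hT)\cap\Q_q)'}$, and $S^{(A)}_{\E_{\refsq},p,q}$ lets you replace the denominator by $\norm{\ltwoinp{\phi_{\hT}}{\cdot}{\hT}}_{H^1_0(\hT)'}$. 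But bounding $|c_{\hT}|$ by this dual norm times $\seminorm{v-\tilde v}_{\hT}$ needs $(v-\tilde v)|_{\hT}\in H^1_0(\hT)$, i.e.\ $\tilde v=v$ on $\partial\hT$, which is impossible since $v$ is not piecewise polynomial. The ratio that actually arises is not controlled by any hypothesis of the theorem. This is also why the statement needs every $\E\in\mathbb E^{(A)}$, not just $\E=\E_{\refsq}$: in the paper the relevant local space is $H^1_{0,\hE^{\irr}_{\hT}}(\hT)$, and $\hE^{\irr}_{\hT}$ ranges over all nonempty subsets of the four edges. The same mismatch of spaces obstructs the edge layer, where in addition a correction ``supported on $\hT_i$'' that is nonzero on the shared edge $\he_i$ would destroy $H^1$-continuity across the patch.

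The paper avoids all of this by working on the dual side. Each element term of $\dr_a$ is rewritten as $v\mapsto\ltwoinp{\vec\sigma_{\hT}}{\nabla v}{\hT}$ for a Raviart--Thomas flux with prescribed divergence and normal traces; Lemmas~\ref{lem:inverse} and~\ref{lem:rtext} give the $p$-robust bound $\norm{\vec\sigma_{\hT}}_{\hT}\lesssim\norm{\ltwoinp{\phi_{\hT}}{\cdot}{\hT}}_{H^1_{0,\hE^{\irr}_{\hT}}(\hT)'}$, after which $S^{(A)}$ and a zero-extension bring in $\norm{\dr_a}_{\hprime{q(p_a+1)}'}$. The edge terms are then removed one at a time by further RT fluxes on $\hT_i$, and here the extension $E$ of Lemma~\ref{lem:stepb} is used not to modify a trial function but to bound a discrete dual norm by $\norm{\dr_a-\dr_a^{(i-1)}}_{\hprime{q(p_a+1)+1}'}$. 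Two details you have inverted: by Lemma~\ref{lem:stepb}(1) the interior edges of $\hT_i$ other than $\he_i$ carry indices $j>i$, not $j<i$; and $S^{(C)}$ enters at the \emph{last} edge $\he_{n_a}$ of an interior-vertex patch (where $\hE^{\loc,D}_{a,n_a}=\emptyset$ so no Dirichlet problem can be posed), not the first. The RT lemmas are the missing ingredient in your sketch; without an analogue, a primal construction cannot deliver $p$-robust stability from the stated saturation coefficients alone.
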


\subsection*{Outline of proof}
Our proof is similar in taste to \cite[Thm.~7.1]{Canuto2017}, with some details requiring a different approach.
We will perform three steps. Write, as in~\eqref{eqn:discres},
\[
  \dr_a(v) = \sum_{\hT \in \hta} \ltwoinp{\phi_{\hT}}{v}{\hT} + \sum_{\he \in \hE_a^{\irr}} \ltwoinp[s]{\phi_{\he}}{v}{\he} \quad (v \in H^1(\omega_a))
\]
for some $\phi_{\hT} \in \Q_{p_{\hT}+1}(\hT)$ and $\phi_{\he} \in \P_{p_a+1}(\he)$.
In \textit{Step (A)} below, we bound the dual norm of the set of element terms;
in \textit{Steps (B) and (C)}, we do the same for the set of edge terms.
Throughout the proof, we will use assumption $p_{\hT} \geq 1$ to find that, for interior vertices $a \in \V_\T^{\irr}$, the residual vanishes on constants ($\psi_a \in U_\T$ so $\dr_a(\bbone) = \dr(\psi_a \bbone) = \dr(\psi_a) = 0$).

In \textit{Step (A)}, we employ $\sup_p \max_{\E \in \mathbb E^{(A)}} S^{(A)}_{\E, p, q(p)} < \infty$ to find, on every rectangle $\hT \in \hta$, a functional $\dr_{\hT} \in H^1_*(\omega_a)'$ with
\begin{equation}
  \norm{\dr_{\hT}}_{H^1_*(\omega_a)'} \lesssim \norm{\dr_a}_{\hprime{q(p_a+1)}'}\quad\text{and}\quad \dr_{\hT}(\bbone) = 0,
  \label{eqn:localnormbound}
\end{equation}
that removes the $\hT$-contribution from $\dr_a$, in the sense that the residual
\[
  \dr^{(0)}_a := \sum_{\hT \in \hta} \dr_{\hT}
\]
satisfies, for $v \in H^1_*(\omega_a)$,
\begin{equation}
  \dr_a(v) - \dr^{(0)}_a(v) = \sum_{\he \in \hE_a^{\irr}} \ltwoinp[s]{\phi^{(0)}_{\he}}{v}{\he} \quad \text{for some} \quad \phi^{(0)}_{\he} \in \P_{p_a+1}(\he).
  \label{eqn:r0props}
\end{equation}

Next, in \textit{Step (B)}, we use the enumeration $(\he_j)_{j=1}^{n_a}$ of the interior edges $\hE_a^{\irr}$ where $n_a := \# \hE_a^{\irr}$. At step $i \in \set{1, \ldots, n_a-1}$, we use $\sup_p \max_{\E \in \mathbb E^{(B)}} S^{(B)}_{\E, p, q(p)} < \infty$ and Lemma~\ref{lem:stepb} below to find a functional $\dr_{\he_i} = \dr_{\hT_i, \he_i} \in H^1_*(\omega_a)'$ with
\begin{equation}
  \norm{\dr_{\he_i}}_{H^1_*(\omega_a)'} \lesssim \norm{\dr_a}_{\hprime{q(p_a+1)+1}'} \quad\text{and}\quad
  \dr_{\he_i}(\bbone) = 0,
  \label{eqn:rinormbound}
\end{equation}
that removes the $\he_i$-contribution from $\dr^{(i-1)}_a$ while not re-introducing contributions on edges $\he_j$ for $j < i$, in the sense that the residual
\[
  \dr^{(i)}_a := \dr^{(i-1)}_a + \dr_{\he_i}
\]
satisfies, for $v \in H^1_*(\omega_a)$,
\begin{equation}
  \dr_a(v) - \dr^{(i)}_a(v) = \sum_{j \geq i + 1} \ltwoinp[s]{\phi_{\he_j}^{(i)}}{v}{\he_j} \quad \text{for some} \quad \phi_{\he_j}^{(i)} \in \P_{p_a+1}(\he_j).
  \label{eqn:riprop}
\end{equation}

Lastly, in \textit{Step (C)}, the final iteration $i = n_a$, we make a distinction. When $a \in \V_\T^{\ext}$ is a boundary vertex, we construct a $\dr_{\he_{n_a}} \in H^1_*(\omega_a)'$ for which~\eqref{eqn:rinormbound} and~\eqref{eqn:riprop} hold once more. 
Then through the triangle inequality, $\# \hta \leq 16$, and $\# \hE^{\irr}_a \leq 24$ we find
\[
  \norm{\dr_a}_{H^1_*(\omega_a)'} \leq 
  \sum_{\hT \in \hta} \norm{\dr_{\hT}}_{H^1_*(\omega_a)'} +
  \sum_{j=1}^{n_a} \norm{\dr_{\he_j}}_{H^1_*(\omega_a)'}
  \lesssim \norm{\dr_a}_{\hprime{q(p_a+1)+1}'}.
\]
When $a \in \V_\T^{\irr}$ is an interior vertex, we use $\sup_p S^{(C)}_{p,q(p)} < \infty$ to bound
\begin{equation}
  \norm{\dr_a - \dr_a^{(n_a-1)}}_{H^1_*(\omega_a)'} \lesssim \norm{\dr_a}_{\hprime{q(p_a+1)}'}
  \label{eqn:drdiffbound}
\end{equation}
which implies that
\[
  \norm{\dr_a}_{H^1_*(\omega_a)'} \leq 
  \norm{\dr_a - \dr_a^{(n_a - 1)}}_{H^1_*(\omega_a)'} +
  \sum_{\hT \in \hta} \norm{\dr_{\hT}}_{H^1_*(\omega_a)'} +
  \sum_{j=1}^{n_a-1} \norm{\dr_{\he_j}}_{H^1_*(\omega_a)'}
  \lesssim \norm{\dr_a}_{\hprime{q(p_a+1)+1}'}.
\]
In either case, we conclude that~\eqref{eqn:reduction} must hold.

\subsection*{Extension lemma}
Proving, in particular, inequality~\eqref{eqn:rinormbound} requires some creativity. Assume for now that $a$ is a boundary vertex (the other case is handled in the main proof). We will require the intermediate result that for some specific finite-dimensional subspace of polynomials $\sp V_i \subset H^1(\hT_i)$, there is, for each $v \in \sp V_i$, a piecewise polynomial $Ev \in H^1_*(\omega_a)$ with
\[
  \seminorm{Ev}_{\omega_a} \lesssim \seminorm{v}_{\hT_i}, \quad \text{and} \quad \ltwoinp{\phi^{(i-1)}_{\he_i}}{v}{\he_i} = \dr_a(Ev) - \dr^{(i-1)}_a(Ev).
\]
Our approach is the following.
Note that $\ltwoinp{\phi^{(i-1)}_{\he_i}}{v}{\he_i}$ is an inner product over a single edge, whereas $\dr_a(Ev) - \dr^{(i-1)}_a(Ev)$ is a sum of inner products $\ltwoinp{\phi^{(i-1)}_{\he_j}}{Ev}{\he_j}$ on all interior edges $\he_j$ with $j \geq i$ (see~\eqref{eqn:riprop}). The desired equality holds for all $v \in \sp V_i$ surely when $Ev$ \text{extends} $v$ (in that $Ev|_{\hT_i} = v$), and $Ev|_{\he_j} = 0$ for every $j \geq i+1$. Moreover, $a \in \V^{\ext}_{\T}$, so $Ev \in H^1_*(\omega_a)$ should vanish on all edges in $\hE^{\ext,D}_{a}$ as well. This gives rise to the \emph{set of patch (resp.~local) Dirichlet edges},
\begin{equation}
  \hE^D_{a,i} := \hE^{\ext,D}_a \cup \set{\he_j \in \hE_a^{\irr}: j \geq i+1}, \quad \hE^{\loc,D}_{a,i} := \hE^D_{a,i} \cap \hE_{\hT_i} \quad (i = 1, \ldots, n_a),
  \label{eqn:diredges}
\end{equation}
and for $v$ that vanishes on all local Dirichlet edges, $Ev$ then vanishes on all \emph{patch} Dirichlet edges $\he \in \hE^D_{a,i}$. Existence of this $Ev$ depends on the enumeration $(\he_i)_{i=1}^{n_a}$ of interior edges. The following lemma shows that with our particular construction, we can build a suitable $E$.

\begin{lemma}[Bounded polynomial extension]
  Let $\hta$ be one of the 13 refined patch types of Figure~\ref{fig:edgetrav}. Let $n_a$, $(\he_i)_{i=1}^{n_a}$, and $(\hT_i)_{i=1}^{n_a}$ be as defined in~\S\ref{sub:edgetrav}.
  For each square $\hT_i$, we enumerate its edges as $(e_1, e_2, e_3, e_4)$, in counterclockwise fashion, starting from the rightmost edge.

  For $1 \leq i \leq n_a-1$, and $i=n_a$ when $a$ is an external vertex, the following holds.
  \begin{enumerate}
    \item The set $\hE^{\loc,D}_{a,i}$ is nonempty. In fact, one of five situations occurs:
      \begin{gather*}
        \text{\emph{(a)}}~~ \hE^{\loc,D}_{a,i} = \set{e_1, e_2, e_3}, \quad \text{\emph{(b)}}~~\hE^{\loc,D}_{a,i} = \set{e_2, e_3, e_4}, \quad \text{\emph{(c)}}~~\hE^{\loc,D}_{a,i} = \set{e_2, e_3}, \\
        \text{\emph{(d)}}~~\hE^{\loc,D}_{a,i} = \set{e_2} \text{ and } e_3 \in \hE^{\ext,N}_a, \quad \text{\emph{(e)}}~~\hE^{\loc,D}_{a,i} = \set{e_3} \text{ and } e_2 \in \hE^{\ext,N}_a.
      \end{gather*}
    \item There is a bounded linear map $E: H^1_{0, \hE^{\loc,D}_{a,i}}(\hT_i) \cap \Q_{q(p_a+1)}(\hT_i) \to H^1(\omega_a) \cap \Q_{q(p_a+1)+1}^{-1}(\hta)$ so that for all $v$, its extension $Ev$ vanishes on patch Dirichlet edges; specifically,
      \[
        Ev|_{\hT_i} = v, \quad \seminorm{Ev}_{\omega_a} \lesssim \seminorm{v}_{\hT_i}, \quad Ev|_{\he} = 0 ~~ (\he \in \hE^D_{a,i}).
      \]
  \end{enumerate}
  \label{lem:stepb}
\end{lemma}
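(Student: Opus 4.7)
The proof separates into the two claims. For part (1), I would proceed by exhaustive case analysis. By construction of the enumeration, $\he_i$ is always either $e_4$ (when $\hT_i$ sits above $\he_i$) or $e_1$ (when $\hT_i$ sits to its left), so the three remaining edges of $\hT_i$ are $\set{e_1,e_2,e_3}$ or $\set{e_2,e_3,e_4}$, respectively. Each such edge is either a patch-boundary edge (in $\hE^{\ext,D}_a \cup \hE^{\ext,N}_a$) or another interior edge $\he_j$ of $\hta$. The enumeration of~\S\ref{sub:edgetrav} is designed so that, within each of the 13 refined-patch types of Figure~\ref{fig:edgetrav} and for every choice of $\he_i$, any such interior-edge neighbour has index $j>i$ and therefore lies in $\hE^D_{a,i}$; intuitively, the grid labelling processes edges from the outer corner inwards, so edges closer to $a$ carry larger indices. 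Verifying this reduces to finitely many instances: for each of the 13 patches one scans each admissible grid position $k(i)$ for $\he_i$ and reads off which of the non-$\he_i$ edges are Dirichlet and which are Neumann. In every instance the outcome fits one of (a)--(e).

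For part (2), the extension $E$ is built by sweeping outwards from $\hT_i$ across the at most 16 squares of $\hta$, repeatedly invoking a $p$-robust edge-to-square polynomial lifting. Specifically, given $\phi \in \P_p(\he)$ vanishing at the endpoints of an edge $\he$ of a square $\hT$, there exists $\Phi \in \Q_{p+1}(\hT)$ with $\Phi|_{\he} = \phi$, $\Phi|_{\partial \hT \setminus \he} = 0$, and $\seminorm{\Phi}_{\hT} \lesssim \norm{\phi}_{H^{1/2}(\he)}$, with constant independent of $p$; analogous bounds hold for data posed on two adjacent edges. Starting with $Ev|_{\hT_i} := v$, I would process the remaining squares of $\hta$ in an order compatible with the enumeration of~\S\ref{sub:edgetrav}, so that each new square $\hT'$ shares a previously-processed edge with the current frontier (providing the interface data), while its other edges either lie in $\hE^D_{a,i}$ (where $Ev$ must vanish) or abut squares to be processed later (where values are prescribed by continuity at the next step). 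The one-degree overhead in $\Q^{-1}_{q(p_a+1)+1}(\hta)$ absorbs the degree bump in each lifting, and continuity across processed interfaces holds by construction.

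The bound $\seminorm{Ev}_{\omega_a} \lesssim \seminorm{v}_{\hT_i}$ then follows by chaining the local $p$-robust lifting estimates across the bounded number of squares and invoking the trace inequality $\norm{v|_{\he_i}}_{H^{1/2}(\he_i)} \lesssim \seminorm{v}_{\hT_i}$, which holds because in every case (a)--(e) the Dirichlet set $\hE^{\loc,D}_{a,i}$ is nonempty and so Poincar\'e applies on $\hT_i$. The main technical obstacle I anticipate is organising the sweep so that, at each step, the incoming boundary data on $\hT'$ is consistent at the corners---in particular, zero wherever the incoming edge meets a Dirichlet edge of $\hT'$. This is most delicate in cases (d) and (e), where a Neumann boundary edge of $\hT_i$ shares a corner with a Dirichlet edge: I would handle these by first extending across the Neumann edge (where no Dirichlet compatibility constraint is imposed at the corner) and only thereafter propagating around the rest of $\hta$, relying on the extra degree in $\Q_{q(p_a+1)+1}$ to accommodate the remaining corner vanishing conditions wherever they arise.
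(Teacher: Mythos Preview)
Your treatment of part~(1) matches the paper's: both argue by direct inspection of the enumeration against the 13 patch types.

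For part~(2), the paper takes a different and considerably simpler route than your sweep. It never traverses all of $\hta$. Instead, for each of the five cases it extends $v$ by \emph{reflection} to at most three neighbouring squares forming a small block $\mathscr T$ (in cases (d)--(e) the reflection is multiplied by a linear decay function, which is precisely where the $+1$ in degree is spent), so that the resulting piecewise polynomial $\underline v$ vanishes on all of $\partial\mathscr T$ except Neumann edges. One then restricts to $\mathscr T\cap\omega_a$ and zero-extends to the rest of the patch. The crucial geometric point is that the enumeration forces every patch Dirichlet edge to lie above or to the left of $\hT_i$, while $\mathscr T$ sits below and to the right, so the zero-extension automatically satisfies all vanishing conditions. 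The seminorm bound is immediate: reflections preserve $\seminorm{\cdot}$, and the decay-function step is an elementary product-rule estimate.

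Your trace-lifting sweep might be salvageable, but as written it has real gaps. First, the lifting you invoke requires the edge data to vanish at both endpoints, yet in cases (d) and (e) the trace of $v$ on $e_1$ or $e_4$ is generically nonzero at the corner shared with the Neumann edge $e_3$ (resp.\ $e_2$); handling this properly requires liftings with data on pairs of adjacent edges and control in $H^{1/2}_{00}$-type norms, whose $p$-robust versions are more delicate than you indicate. Second, ``process the remaining squares in an order compatible with the enumeration'' is not actually specified---the enumeration is on edges, not squares---and you have not verified that a sweep order exists avoiding corner incompatibilities across all 13 patch types. Third, your proposed fix for (d)--(e), ``first extending across the Neumann edge'', cannot work: the Neumann edge lies on $\partial\omega_a$, so there is no square of $\hta$ on the other side to extend into. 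The paper's reflection-and-decay construction sidesteps all of these issues in a few lines.
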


\begin{proof}
  A careful visual inspection of the enumeration for each of the 13 patch types of Figure~\ref{fig:edgetrav} shows that condition (1) holds: by enumerating the edges right-to-left, bottom-to-top, we ensure $e_2$ and $e_3$ are (situations (a--c)) both in $\hE^{\ext,D}_a$ or equal to some $\he_j$ for $j > i$, or (situations (d--e)) when $\hT_i$ is in the topmost row or leftmost column, either $e_2$ or $e_3$ is in $\hE^{\ext,N}_a$, but never both.

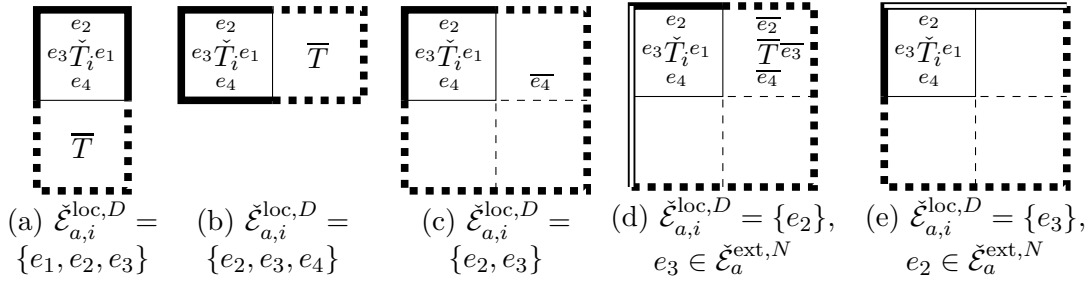
\begin{figure}[h!]
  \begin{tikzpicture}[scale=1.2]
    \draw (0,0) rectangle (1,1);

    \node at (0.5,0.5){$\hT_i$};
    \node[left] at (1,0.5){$_{e_1}$};
    \node[below] at (0.5,1){$_{e_2}$};
    \node[right] at (0,0.5){$_{e_3}$};
    \node[above] at (0.5,0){$_{e_4}$};

    \node at (0.5,-0.5){$\overline T$};

    \draw[line width=1mm] (0,0) -- (0,1) -- (1,1) -- (1,0);
    \draw[line width=1mm,dashed] (0,0) -- (0,-1) -- (1,-1) -- (1,0);
    
    \node[below,align=center] at (0.5,-1){(a) $\hE^{\loc,D}_{a,i} =$\\$\set{e_1, e_2, e_3}$};
  \end{tikzpicture}
  \begin{tikzpicture}[scale=1.2]
    \draw (0,0) rectangle (1,1);

    \node at (0.5,0.5){$\hT_i$};
    \node[left] at (1,0.5){$_{e_1}$};
    \node[below] at (0.5,1){$_{e_2}$};
    \node[right] at (0,0.5){$_{e_3}$};
    \node[above] at (0.5,0){$_{e_4}$};

    \node at (1.5,0.5){$\overline T$};

    \draw[line width=1mm] (1,1) -- (0,1) -- (0,0) -- (1,0);
    \draw[line width=1mm,dashed] (1,0) -- (2,0) -- (2,1) -- (1,1);
    
    \node[below,align=center] at (1,-1){(b) $\hE^{\loc,D}_{a,i} =$\\$\set{e_2, e_3, e_4}$};
  \end{tikzpicture}\hspace{0.8em}
  \begin{tikzpicture}[scale=1.2]
    \draw (0,0) rectangle (1,1);
    \draw[dashed] (1,-1) -- (1,0) -- (2,0);

    \node at (0.5,0.5){$\hT_i$};
    \node[left] at (1,0.5){$_{e_1}$};
    \node[below] at (0.5,1){$_{e_2}$};
    \node[right] at (0,0.5){$_{e_3}$};
    \node[above] at (0.5,0){$_{e_4}$};

    \node[above] at (1.5,0){$_{\overline{e_4}}$};

    \draw[line width=1mm] (1,1) -- (0,1) -- (0,0);
    \draw[line width=1mm,dashed] (0,0) -- (0,-1) -- (2,-1) -- (2,1) -- (1,1);
    
    \node[below,align=center] at (1,-1){(c) $\hE^{\loc,D}_{a,i} =$\\$\set{e_2, e_3}$};
  \end{tikzpicture}
  \begin{tikzpicture}[scale=1.2]
    \draw (0,0) rectangle (1,1);
    \draw[dashed] (1,-1) -- (1,0) -- (2,0);

    \node at (0.5,0.5){$\hT_i$};
    \node[left] at (1,0.5){$_{e_1}$};
    \node[below] at (0.5,1){$_{e_2}$};
    \node[right] at (0,0.5){$_{e_3}$};
    \node[above] at (0.5,0){$_{e_4}$};

    \node at (1.5,0.5){$\overline T$};
    \node[below] at (1.5,1){$_{\overline{e_2}}$};
    \node[left] at (2,0.5){$_{\overline{e_3}}$};
    \node[above] at (1.5,0){$_{\overline{e_4}}$};

    \draw[line width=1mm] (1,1) -- (0,1) -- (0,-1);
    \draw[line width=1mm,dashed] (0,-1) -- (2,-1) -- (2,1) -- (1,1);
    \draw[white,line width=0.5mm] (0,-1) -- (0,1);
    
    \node[below,align=center] at (1,-1){(d) $\hE^{\loc,D}_{a,i} = \set{e_2}$,\\$e_3 \in \hE^{\ext,N}_a$};
  \end{tikzpicture}
  \begin{tikzpicture}[scale=1.2]
    \draw (0,0) rectangle (1,1);
    \draw[dashed] (1,-1) -- (1,0) -- (2,0);

    \node at (0.5,0.5){$\hT_i$};
    \node[left] at (1,0.5){$_{e_1}$};
    \node[below] at (0.5,1){$_{e_2}$};
    \node[right] at (0,0.5){$_{e_3}$};
    \node[above] at (0.5,0){$_{e_4}$};

    \draw[line width=1mm] (2,1) -- (0,1) -- (0,0);
    \draw[line width=1mm,dashed] (0,0) -- (0,-1) -- (2,-1) -- (2,1);
    \draw[white,line width=0.5mm] (0,1) -- (2,1);
    
    \node[below,align=center] at (1,-1){(e) $\hE^{\loc,D}_{a,i} = \set{e_3}$,\\$e_2 \in \hE^{\ext,N}_a$};
  \end{tikzpicture}
  \caption{The five different extension cases of Lemma~\ref{lem:stepb}. The full thick line on $\partial \hT_i$ denotes its local Dirichlet boundary $\hE^{\loc,D}_{a,i}$, and the dashed thick line shows the Dirichlet boundary of the extension; double lines indicate edges in $\hE^{\ext,N}_a$.}
  \label{fig:b2ext}
\end{figure}
  
    By the first result of this Lemma, there are essentially five cases to look at. See Figure~\ref{fig:b2ext}. Denote with $\mathscr T$ the union of squares in the appropriate case.
    Let $v \in H^1_{0, \hE^{\loc,D}_{a,i}}(\hT_i) \cap \Q_{q(p_a+1)}(\hT_i)$. We will use multiple reflections of $v$ to find a piecewise polynomial $\underline v \in H^1(\mathscr T)$ (of degree $q(p_a+1)+1$) that vanishes on the part of $\partial \mathscr T$ denoted by the thick line. Restricting $\underline v$ to $\mathscr T \cap \omega_a$ (because $\mathscr T$ may contain squares outside $\hta$) yields a function that vanishes on the edges $\he \in \hE^{\irr}_a$ with $\he \subset \partial \mathscr T$, so that we can easily zero-extend $\underline v|_{\mathscr T \cap \omega_a}$ to $Ev \in H^1(\omega_a) \cap \Q_{q(p_a+1) + 1}^{-1}(\hta)$.
    
    The choice of $\hT_i$ ensures that $\he_i$ is its right or bottom edge. Moreover, the enumeration is bottom-right to top-left, so that every patch Dirichlet edge is positioned either above or to the left of $\hT_i$. On the other hand, the support of our extension $Ev$ is---as we will shortly see---to the right or bottom of $\hT_i$. Therefore, $Ev$ necessarily vanishes on all of $\hE^D_{a,i}$.
    
    It remains to construct $\underline v$ with the desired properties above, for each situation.
\begin{enumerate}[(a)]
  \item Denote with $\overline{v}, \overline{T}$ the reflections of $v$ and $\hT_i$ across $e_4$. Then $\overline v|_{e_4} = v|_{e_4}$ and $\seminorm{\overline v}_{\overline T} = \seminorm{v}_{\hT}$, so the extension $\underline v$ defined by
    $\underline v|_{\hT} := v$ and $\underline v|_{\overline T} := \overline v$
    vanishes on all of $\partial (\hT_i \cup \overline T)$, is continuous globally, and polynomial on both squares separately.
  \item The proof of this case is analogous to that of (a).
  \item Denote with $\overline{e_4}$ the reflection of $e_4$ across $e_1$. Denote with $\overline{\overline v}$ the extension of $v$ on $\hT_i \cup \overline T$. The extension $\underline v$ of $\overline{\overline v}$ across $e_4 \cup \overline{e_4}$ is the desired function.
  \item Denote with $\overline{v}, \overline{T}, \overline{e_2}, \overline{e_3}, \overline{e_4}$ the reflections of $v, T, e_2, e_3, e_4$ across $e_1$, respectively. Let $\overline \phi \in \Q_1(\overline T)$ be a decay function defined by $\overline \phi|_{e_1} = 1$ and $\overline \phi|_{\overline{e_3}} = 0$. Then
    \[
      (\overline v \overline \phi)|_{e_1} = v|_{e_1}, \quad \overline v \overline \phi \in H^1_{0, \overline{e_2} \cup \overline{e_3}}(\overline T) \cap \Q_{q(p_a+1)+1}(\overline T),
    \]
    and we thus see that the function $\overline{\overline v}$ defined by
    $\overline{\overline v}|_{\hT_i} := v$, $\overline{\overline v}|_{\overline T} := \overline v \overline \phi$
    is a continuous polynomial extension of $v$ that moreover vanishes on $\overline{e_3}$. Its norm satisfies $\seminorm{\overline{\overline v}}_{\hT_i \cup \overline T} \lesssim \seminorm{v}_{\hT_i}$ (proof is analogous to~\cite[(3.29)]{Ern2015a}).
    The desired function $\underline v$ is found as the extension of $\overline{\overline v}$ across $e_4 \cup \overline{e_4}$.
  \item The proof of this case is analogous to that of (d).\qedhere
  \end{enumerate}
\end{proof}

\subsection*{Proof of Theorem~\ref{thm:reduction}} We proceed in several steps.
\subsubsection*{Step (A0)}
For every $\hT \in \hta$, we will find our functional $\dr_{\hT} \in H^1_*(\omega_a)'$ by constructing a Raviart-Thomas flux $\vec \sigma_{\hT} \in \RT_{p_a+1}(\hT)$, and write $\dr_{\hT}(v) = \ltwoinp{\vec \sigma_{\hT}}{\nabla v}{\hT}$. Let $\hT \in \hta$.

\subsubsection*{Step (A1)} Let us construct $\dr_{\hT}$. Lemma~\ref{lem:inverse} guarantees that there is a $\vec \sigma_{\hT}^{(1)} \in \RT_{p_a+1}(\hT)$ with
\begin{equation}
  \div \vec \sigma_{\hT}^{(1)} = \phi_{\hT}\quad \text{and} \quad \norm{\vec \sigma_{\hT}^{(1)}}_{\hT} \lesssim \norm{\ltwoinp{\phi_{\hT}}{\cdot}{\hT}}_{H^1_0(\hT)'}.
  \label{eqn:sigma1}
\end{equation}

By definition, $\dr_a$ has no contributions on the exterior edges of $\hta$. However, this $\vec \sigma_{\hT}^{(1)}$ \emph{can have} a nonzero normal component on edges in $\hE_{\hT}^{\ext}$. Let us resolve this inconsistency.

Without loss of generality, we may assume that $\hE_{\hT}^{\irr} \not= \emptyset$,\footnote{When $\hE_{\hT}^{\irr} = \emptyset$, then $\hta$ consists of a single element $\hT$, in which case $H^1_*(\omega_a) = H^1_{0, \hE^{\ext}_a}(\hT)$ so that we may invoke the assumption $\sup_p S^{(A)}_{\hE^{\ext}_a, p, q(p)} < \infty$ directly to find the saturation result~\eqref{eqn:reduction}.}
so the Galerkin problem
\[
  \honeinp{w_{\hT}}{v}{\hT} = F_{\hT}(v) := \sum_{\he \in \hE_{\hT}^{\ext}} \ltwoinp[s]{\vec \sigma_{\hT}^{(1)} \cdot \vec n_{\hT}}{v}{\he} \quad (v \in \sp H_{\hT}) \quad \text{where} \quad \boxed{\sp H_{\hT} := H^1_{0, \hE_{\hT}^{\irr}}(\hT)},
\]
has a unique solution $w_{\hT} \in \sp H_{\hT}$ for which it follows directly that
\[
  \div \nabla w_{\hT} = 0, \quad
  \seminorm{w_{\hT}}_{\hT} \leq \norm{F_{\hT}}_{\sp H_{\hT}'}, \quad
    \nabla w_{\hT} \cdot \vec n_{\hT} = - \vec \sigma_{\hT}^{(1)} \cdot \vec n_{\hT} \text{~~for~~} \he \in \hE_{\hT}^{\ext}.
\]
Now, integration by parts tells us that
\[
  F_{\hT}(v) = \ltwoinp{\div \vec \sigma^{(1)}_{\hT}}{v}{\hT} + \ltwoinp{\vec \sigma^{(1)}_{\hT}}{\nabla v}{\hT} \quad \left(v \in \sp H_{\hT}\right).
\]
Then, through~\eqref{eqn:sigma1} and (for the final inequality) $H^1_0(\hT) \subset \sp H_{\hT}$, we have
\[
  \norm{F_{\hT}}_{\sp H_{\hT}'} \leq \norm{\ltwoinp{\div \vec \sigma^{(1)}_{\hT}}{\cdot}{\hT}}_{\sp H_{\hT}'} + \norm{\vec \sigma_{\hT}^{(1)}}_{\hT} \lesssim \norm{\ltwoinp{\phi_{\hT}}{\cdot}{\hT}}_{\sp H_{\hT}'} + \norm{\ltwoinp{\phi_{\hT}}{\cdot}{\hT}}_{H^1_0(\hT)'} \leq 2 \norm{\ltwoinp{\phi_{\hT}}{\cdot}{\hT}}_{\sp H_{\hT}'},
\]
so that $\seminorm{w_{\hT}}_{\hT} \lesssim \norm{\ltwoinp{\phi_{\hT}}{\cdot}{\hT}}_{\sp H_{\hT}'}$. We then invoke Lemma~\ref{lem:rtext} with $\vec \tau := \nabla w_{\hT}$, and $\varphi \in L_2(\partial T)$ piecewise defined on $\he \in \hE_{\hT}^{\ext}$ as $\varphi|_{\he} := - \vec \sigma^{(1)}_{\hT} \cdot \vec n_{\hT}$ and on the rest of $\partial \hT$ such that $\varphi$ has mean zero, to find
a $\vec \sigma^{(2)}_{\hT} \in \RT_{p_a+1}(\hT)$ for which
\begin{equation}
  \div \vec \sigma_{\hT}^{(2)} = 0, \quad \vec \sigma_{\hT}^{(2)} \cdot \vec n_{\hT} = - \vec \sigma_{\hT}^{(1)} \cdot \vec n_{\hT} \text{~~for~~}\he \in \hE_{\hT}^{\ext}, \quad 
  \norm{\vec \sigma_{\hT}^{(2)}}_{\hT} \lesssim \norm{\ltwoinp{\phi_{\hT}}{\cdot}{\hT}}_{\sp H_{\hT}'},
  \label{eqn:localrtbound}
\end{equation}
so that $\vec \sigma_{\hT} := \vec \sigma_{\hT}^{(1)} + \vec \sigma_{\hT}^{(2)}$ has bounded norm, with normal components vanishing on $\hE^{\ext}_{\hT}$.

We then define $\dr_{\hT} \in H^1_*(\omega_a)'$ and $\dr^{(0)}_a \in H^1_*(\omega_a)'$ as
\[
  \dr_{\hT}(v) := \ltwoinp{\vec \sigma_{\hT}}{\nabla v}{\hT} \quad \text{and} \quad
  \dr^{(0)}_a := \sum_{\hT \in \hta} \dr_{\hT}.
\]

\subsubsection*{Step (A2)} We will verify~\eqref{eqn:r0props}. Integration by parts yields that for $v \in H^1_*(\omega_a)$,
\[
  \dr_{\hT}(v) = -\ltwoinp{\phi_{\hT}}{v}{\hT} + \sum_{\he \in \hE^{\irr}_{\hT}} \ltwoinp[s]{\phi^{(0)}_{\hT, \he}}{v}{\he},
\quad \text{where} \quad
  \phi^{(0)}_{\hT, \he} := \vec \sigma_{\hT} \cdot \vec n_{\hT} \in \P_{p_a+1}(\he).
\]
Therefore, $\dr^{(0)}_a$ removes all element contributions from $\dr_a$; it follows that indeed, $\dr_a - \dr^{(0)}_a$ is a sum of contributions over (interior) edges:
\[
  \dr_a(v) - \dr_a^{(0)}(v) = \sum_{\he \in \hE_a^{\irr}} \ltwoinp[s]{\phi_{\he}}{v}{\he} - \sum_{\hT \in \hta} \sum_{\he \in \hE_{\hT}^{\irr}} \ltwoinp[s]{\phi^{(0)}_{\hT,\he}}{v}{\he} =: \sum_{\he \in \hE_a^{\irr}} \ltwoinp[s]{\phi^{(0)}_{\he}}{v}{\he} \quad (v \in H^1_*(\omega_a)).
\]
Now, every $\phi^{(0)}_{\he}$ is a sum of polynomials $\phi^{(0)}_{\hT, \he}$, so indeed $\phi^{(0)}_{\he} \in \P_{p_a+1}(\he)$.

\subsubsection*{Step (A3)} We verify~\eqref{eqn:localnormbound}. By definition, $\dr_{\hT}(\bbone) = 0$.
Cauchy-Schwarz, \eqref{eqn:sigma1}, and~\eqref{eqn:localrtbound} imply
\[
  \norm{\dr_{\hT}}_{H^1_*(\omega_a)'} \leq \norm{\vec \sigma_{\hT}}_{\hT} = \norm{\vec \sigma_{\hT}^{(1)} + \vec \sigma_{\hT}^{(2)}}_{\hT} \lesssim \norm{\ltwoinp{\phi_{\hT}}{\cdot}{\hT}}_{\sp H_{\hT}'}.
\]
Moreover, $\hE_{\hT}^{\irr} \not= \emptyset$, hence it can be identified with a set $\E \in \mathbb E^{(A)}$ from~\eqref{eqn:Es}. By assumption, $\sup_p S^{(A)}_{\E, p+1, q(p+1)} \leq \hat S$, so that through Lemma~\ref{lem:refsat}, we find
\[
  \norm{\ltwoinp{\phi_{\hT}}{\cdot}{\hT}}_{\sp H_{\hT}'} \lesssim \norm{\ltwoinp{\phi_{\hT}}{\cdot}{\hT}}_{\sp V_{\hT}'}, \quad \text{where} \quad \boxed{\sp V_{\hT} := \sp H_{\hT} \cap \Q_{q(p_a + 1)}(\hT)}.
\]

Every $v \in \sp V_{\hT}$ vanishes on interior edges; write its zero-extension to $\omega_a$ as $\overline v \in H^1(\omega_a)$. Then
\[
  \hprime{q(p_a+1)} \ni
  \overline{\overline v} := \begin{cases}
    \overline v - \ltwoinp{\overline v}{\bbone}{\omega_a} & a \in \V_\T^{\irr}, \\
    \overline v & a \in \V_\T^{\ext}.
  \end{cases}
\]
By $\dr_a(\bbone) = 0$ for $a \in \V^{\irr}_\T$, we have $\ltwoinp{\phi_{\hT}}{v}{\hT} = \dr_a(\overline v) = \dr_a(\overline{\overline v})$; moreover, $\seminorm{v}_{\hT} = \seminorm{\overline{\overline v}}_{\omega_a}$, so
\[
  \norm{\ltwoinp{\phi_{\hT}}{\cdot}{\hT}}_{\sp V_{\hT}'} := \sup_{\mathclap{0 \not= v \in \sp V_{\hT}}} \frac{\abs{\ltwoinp{\phi_{\hT}}{v}{\hT}}}{\seminorm{v}_{\hT}} = \sup_{\mathclap{0 \not= v \in \sp V_{\hT}}} ~~ ~~ \frac{\abs{\dr_a(\overline{\overline v})}}{\seminorm{\overline{\overline v}}_{\omega_a}} \leq \sup_{0 \not= w \in \hprime{q(p_a+1)}} \frac{\abs{\dr_a(w)}}{\seminorm{w}_{\omega_a}} =: \norm{\dr_a}_{\hprime{ q(p_a+1)}'}.
\]
Chaining the dual norm inequalities in this step yields~\eqref{eqn:localnormbound}.

\subsubsection*{Step (B0)}
In this step, we traverse the interior edges $\hE_a^{\irr}$ in the order $(\he_j)_{j=1}^{n_a}$ constructed in \S\ref{sub:edgetrav}. Let $(\hT_j)_{j=1}^{n_a}$ be the sequence of squares for each $\he_j$.

At each iteration of the traversal, we use result (1) of Lemma~\ref{lem:stepb} to remove the $\he_i$-contribution from the previous residual by---in a fashion similar to \textit{Step (A1)}---solving a local Galerkin problem and constructing a Raviart-Thomas flux $\vec \sigma_i$ with specific properties. The resulting functional $\dr_{\he_i} \in H^1_*(\omega_a)'$ will be found as $\ltwoinp{\vec \sigma_i}{\nabla v}{\hT_i}$. We then use result (2) of the Lemma to establish the dual norm bound of~\eqref{eqn:rinormbound}, similar to \textit{Step (A3)}.

We will continue by induction. Let $i=1$.

\subsubsection*{Step (B1)}
We construct $\dr_{\he_i}$. By result (1) of Lemma~\ref{lem:stepb}, we have $\hE^{\loc,D}_{a,i} \not= \emptyset$, so the problem
\begin{equation}
  \honeinp{w^{(i)}}{v}{\hT_i} = \ltwoinp[s]{\phi^{(i-1)}_{\he_i}}{v}{\he_i} \quad (v \in \sp H_i) \quad \text{where} \quad \boxed{\sp H_i := H^1_{0, \E_{a,i}^{\loc,D}}(\hT_i)}
  \label{eqn:rilocalproblem}
\end{equation}
has a unique solution $w^{(i)} \in \sp H_i$ for which it holds that
\[
  \div \nabla w^{(i)} = 0, \quad
  \seminorm{w^{(i)}}_{\hT_i} \leq \norm{\ltwoinp{\phi^{(i-1)}_{\he_i}}{\cdot}{\he_i}}_{\sp H_i'}, \quad
  \begin{cases}
    \nabla w^{(i)} \cdot \vec n_{\hT_i} = - \phi^{(i-1)}_{\he_i} \text{ on $\he_i$}, \\
    \nabla w^{(i)} \cdot \vec n_{\hT_i} = 0 \text{ on $\set{ \he_j \in \hE^{\irr}_{\hT_i} : j < i}$}, \\
    \nabla w^{(i)} \cdot \vec n_{\hT_i} = 0 \text{ on $\hE^{\ext,N}_{\hT_i}$}.
  \end{cases}
\]
By Lemma~\ref{lem:rtext}, there is a $\vec \sigma_i \in \RT_{p_a+1}(\hT_i)$ with the same normal components as $\nabla w^{(i)}$, with
\begin{equation}
  \div \vec \sigma_i = 0, \quad \norm{\vec \sigma_i}_{\hT_i} \lesssim \norm{\ltwoinp{\phi^{(i-1)}_{\he_i}}{\cdot}{\he_i}}_{\sp H_i'}.
  \label{eqn:sigmaibound}
\end{equation}
We then define $\dr_{\he_i} \in H^1_*(\omega_a)'$ and $\dr^{(i)}_a \in H^1_*(\omega_a)'$ as
\[
  \dr_{\he_i}(v) := \ltwoinp{\vec \sigma_i}{\nabla v}{\hT_i} \quad \text{and} \quad \dr^{(i)}_a(v) := \dr^{(i-1)}_a(v) + \dr_{\he_i}(v) \quad (v \in H^1_*(\omega_a)).
\]

\subsubsection*{Step (B2)}
Let's look at~\eqref{eqn:riprop}. In light of~\eqref{eqn:r0props} when $i=1$, or $\eqref{eqn:riprop}$ for $i \geq 2$, suppose we have
\begin{equation}
  \dr_a(v) - \dr^{(i-1)}_a(v) = \sum_{j \geq i} \ltwoinp[s]{\phi^{(i-1)}_{\he_j}}{v}{\he_j} \text{~~for some~~} \phi^{(i-1)}_{\he_j} \in \P_{p_a+1}(\he_j) \quad (v \in H^1_*(\omega_a)).
  \label{eqn:previousrisum}
\end{equation}
Using that $v \in H^1_*(\omega_a)$ vanishes along edges in $\hE^{\ext,D}_{\hT_i}$, and considering the normal components of $\vec \sigma_i$, integration by parts yields~\eqref{eqn:riprop}:
\begin{align*}
  \dr_a(v) - \dr^{(i)}_a(v) &= \left[ \dr_a(v) - \dr^{(i-1)}_a(v)\right] - \dr_{\he_i}(v) = \sum_{j \geq i} \ltwoinp[s]{\phi^{(i-1)}_{\he_j}}{v}{\he_j} - \ltwoinp{\vec \sigma_i \cdot \vec n_{\hT_i}}{v}{\partial \hT_i} \\
  &= \sum_{j \geq i} \ltwoinp[s]{\phi^{(i-1)}_{\he_j}}{v}{\he_j} - \ltwoinp{\vec \sigma_i \cdot \vec n_{\hT_i}}{v}{\he_i} - \sum_{\mathclap{\he_j \in \hE_{\hT_i}^{\irr}: j > i}} \ltwoinp{\vec \sigma_i \cdot \vec n_{\hT_i}}{v}{\he_j} \\
  &= \sum_{\mathclap{j \geq i+1}} \ltwoinp[s]{\phi^{(i)}_{\he_j}}{v}{\he_j} - \sum_{\mathclap{\he_j \in \hE_{\hT_i}^{\irr}: j > i}} \ltwoinp[s]{\vec \sigma_i \cdot \vec n_{\hT_i}}{v}{\he_j} 
  =: \sum_{j \geq i+1} \ltwoinp[s]{\phi^{(i)}_{\he_j}}{v}{\he_j} \quad \text{for some} \quad \phi^{(i)}_{\he_j} \in \P_{p_a+1}(\he_j).
\end{align*}

\subsubsection*{Step (B3)}
We verify~\eqref{eqn:rinormbound}. By definition, $\dr_{\he_i}(\bbone) = 0$. Moreover, by result (1) of Lemma~\ref{lem:stepb}, $\hE^{\loc,D}_{a,i}$ corresponds with an $\E \in \mathbb E^{(B)}$ from~\eqref{eqn:Es}.\footnote{The set $\hE^{\loc,D}_{a,i}$ is in one of five states, whereas $\mathbb E^{(B)}$ has four; situation (a) and (b) of Lemma~\ref{lem:stepb} correspond with the same $\E \in \mathbb E^{(B)}$.} By assumption, $S^{(B)}_{\E, p_a+1, q(p_a+1)} \leq \hat S$, so~\eqref{eqn:sigmaibound} and Lemma~\ref{lem:refsat} yield
\[
  \norm{\dr_{\he_i}}_{H^1_*(\omega_a)'} \leq \norm{\vec \sigma_i}_{\hT_i} \lesssim \norm{\ltwoinp{\phi^{(i-1)}_{\he_i}}{\cdot}{\he_i}}_{\sp H_i'} \lesssim \norm{\ltwoinp{\phi^{(i-1)}_{\he_i}}{\cdot}{\he_i}}_{\sp V_i'}, \quad \text{where} \quad \boxed{\sp V_i := \sp H_i \cap \Q_{q(p_a + 1)}(\hT_i)}.
\]

To establish~\eqref{eqn:rinormbound}, it suffices to show
\begin{equation}
  \norm{\ltwoinp{\phi^{(i-1)}_{\he_i}}{\cdot}{\he_i}}_{\sp V_i'} \lesssim \norm{\dr_a - \dr^{(i-1)}_a}_{\hprime{q(p_a+1)+1}'},
  \label{eqn:Finormbound}
\end{equation}
because (by $\dr_a - \dr^{(i-1)}_a = \dr_a - \dr_a^{(0)} + \sum_{j=1}^{i-1} \dr_{\he_j}$, the triangle inequality, \eqref{eqn:rinormbound}, and $\# \hta \leq 16$),
\[
  \norm{\dr_a - \dr^{(i-1)}_a}_{\hprime{q(p_a+1)+1}'} \lesssim
  \norm{\dr_a - \dr^{(0)}_a}_{\hprime{q(p_a+1)+1}'} \lesssim
  \norm{\dr_a}_{\hprime{q(p_a+1)+1}'}.
\]

We proceed as in \textit{Step (A3)}.
Take $v \in \sp V_i$. Result (2) of Lemma~\ref{lem:stepb} guarantees a bounded extension from $v$ to a $Ev \in H^1(\omega_a) \cap \Q_{q(p_a+1)+1}^{-1}(\hta)$ that vanishes on interior edges $\he_j$ with $j > i$.
Moreover, $Ev$ is zero on edges $\he \in \hE^{\ext,D}_a$ whenever $a \in \V_\T^{\ext}$, so that in fact
\[
  \hprime{q(p_a+1)+1} \ni
  \overline{\overline v} := \begin{cases}
    Ev - \ltwoinp{Ev}{\bbone}{\omega_a} & a \in \V_\T^{\irr}, \\
    Ev & a \in \V_{\T}^{\ext},
  \end{cases}
\]
with $\seminorm{\overline{\overline v}}_{\omega_a} = \seminorm{Ev}_{\omega_a} \lesssim \seminorm{v}_{\hT}$.

Now, $\dr_a(\bbone) - \dr^{(i-1)}_a(\bbone) = 0$ when $a \in \V^{\irr}_\T$, so
$\dr_a(\overline{\overline v}) - \dr^{(i-1)}_a(\overline{\overline v}) = \dr_a(Ev) - \dr^{(i-1)}_a(Ev)$. Moreover, $Ev|_{\he_i} = v|_{\he_i}$ and $Ev|_{\he_j} = 0$ for $j > i$, so almost all terms of~\eqref{eqn:previousrisum} vanish when we plug in $Ev$, which yields $\dr_a(Ev) - \dr^{(i-1)}_a(Ev) = \ltwoinp{\phi^{(i-1)}_{\he_i}}{Ev}{\he_i} = \ltwoinp{\phi^{(i-1)}_{\he_i}}{v}{\he_i}$. Then, \eqref{eqn:Finormbound} follows by
\[
  \norm{\ltwoinp{\phi^{(i-1)}_{\he_i}}{\cdot}{\he_i}}_{\sp V_i'} = \sup_{\mathclap{0 \not= v \in \sp V_i}} \frac{\abs{\ltwoinp{\phi^{(i-1)}_{\he_i}}{v}{\he_i}}}{\seminorm{v}_{\hT_i}} \lesssim \sup_{\mathclap{0 \not= v \in \sp V_i}} \frac{\abs{\dr_a(\overline{\overline v}) - \dr^{(i-1)}_a(\overline{\overline v})}}{\seminorm{\overline{\overline v}}_{\omega_a}} \leq \norm{\dr_a - \dr^{(i-1)}_a}_{\hprime{q(p_a+1)+1}'}.
\]

\subsubsection*{Step (B4)} We repeat \textit{Steps (B1)--(B3)} for $i \in \set{2, \ldots, n_a-1}$, at each step finding functionals $\dr_{\he_i}$ and $\dr^{(i)}_a$ for which~\eqref{eqn:rinormbound} and~\eqref{eqn:riprop} hold.

\subsubsection*{Step (C)} When $a \in \V_{\T}^{\ext}$, the results of Lemma~\ref{lem:stepb} are satisfied once more for $i=n_a$. This allows us to repeat \textit{Steps (B1)--(B3)} for a $\dr_{\he_{n_a}} \in H^1_*(\omega_a)'$ satisfying~\eqref{eqn:rinormbound} and~\eqref{eqn:riprop}.

When $a \in \V_{\T}^{\irr}$, it is not possible to continue the iteration; the set $\E^{\loc,D}_{n_a}$ is empty so we cannot solve~\eqref{eqn:rilocalproblem}. However, we do know from~\eqref{eqn:riprop} that for $v \in H^1_*(\omega_a)$,
\[
  \dr_a(v) - \dr^{(n_a-1)}_a(v) = \ltwoinp[s]{\phi^{(n_a-1)}_{\he_{n_a}}}{v}{\he_{n_a}} \quad \text{for some} \quad \phi^{(n_a-1)}_{\he_{n_a}} \in \P_{p_a+1}(\he_j).
\]
Noting $\seminorm{v - \ltwoinp{v}{\bbone}{\hT_{n_a}}}_{\hT_{n_a}} \leq \seminorm{v}_{\omega_a}$ and $\ltwoinp{\phi^{(n_a-1)}_{\he_{n_a}}}{\bbone}{\he_{n_a}} = 0$ (by $\dr_a(\bbone) = 0 = \dr^{(n_a-1)}_a(\bbone)$), we find
\[
  \norm{\ltwoinp{\phi^{(n_a-1)}_{\he_{n_a}}}{\cdot}{\he_{n_a}}}_{H^1_*(\omega_a)'} \leq \norm{\ltwoinp{\phi^{(n_a-1)}_{\he_{n_a}}}{\cdot}{\he_{n_a}}}_{\sp H_{n_a}'} \quad \text{where} \quad \boxed{\sp H_{n_a} := H^1(\hT_{n_a})/\R}.
\]
The fact $\ltwoinp{\phi^{(n_a-1)}_{\he_{n_a}}}{\bbone}{\he_{n_a}} = 0$ also allows us to use $S^{(C)}_{p_a, q(p_a+1)} \leq \hat S$ and invoke Lemma~\ref{lem:refsat} yielding
\[
  \norm{\ltwoinp{\phi^{(n_a-1)}_{\he_{n_a}}}{\cdot}{\he_{n_a}}}_{\sp H_{n_a}'} \lesssim \norm{\ltwoinp{\phi^{(n_a-1)}_{\he_{n_a}}}{\cdot}{\he_{n_a}}}_{\sp V_{n_a}'} \quad \text{where} \quad \boxed{\sp V_{n_a} := \sp H_{n_a} \cap \Q_{q(p_a+1)}(\hT_{n_a})}.
\]

Any $v \in \sp V_{n_a}$ has zero mean, so reflecting across every row and column of $\hta$ yields a mean-zero extension $\overline v \in \hprime{ q(p_a+1)}$ with, by $\# \hta \leq 16$, norm $\seminorm{\overline v}_{\omega_a} = \sqrt{\# \hta}\seminorm{v}_{\hT_{n_a}} \lesssim \seminorm{v}_{\hT_{n_a}}$.

Finally, by $\ltwoinp{\phi^{(n_a-1)}_{\he_{n_a}}}{\overline v}{\he_{n_a}} = \ltwoinp{\phi^{(n_a-1)}_{\he_{n_a}}}{v}{\he_{n_a}}$, we have the dual norm bound
\[
  \norm{\ltwoinp{\phi^{(n_a-1)}_{\he_{n_a}}}{\cdot}{\he_{n_a}}}_{\sp V_{n_a}'}
  = \sup_{\mathclap{0\not= v \in \sp V_{n_a}}} \frac{\abs{\ltwoinp{\phi^{(n_a-1)}_{\he_{n_a}}}{\overline v}{\he_{n_a}}}}{\seminorm{v}_{\hT_{n_a}}}
  \lesssim \sup_{\mathclap{0\not= v \in \sp V_{n_a}}} \frac{\abs{\ltwoinp{\phi^{(n_a-1)}_{\he_{n_a}}}{\overline v}{\he_{n_a}}}}{\seminorm{\overline v}_{\omega_a}}
  \leq \norm{\ltwoinp{\phi^{(n_a-1)}_{\he_{n_a}}}{\cdot}{\he_{n_a}}}_{\hprime{q(p_a+1)}'};
\]
then, through the triangle inequality, we find
\[
  \norm{\ltwoinp{\phi^{(n_a-1)}_{\he_{n_a}}}{\cdot}{\he_{n_a}}}_{\hprime{ q(p_a + 1)}'} \leq \norm{\dr_a}_{\hprime{ q(p_a + 1)}'} + \norm{\dr^{(n_a-1)}_a}_{\hprime{ q(p_a + 1)}'} \lesssim \norm{\dr_a}_{\hprime{ q(p_a+1)}'}.
\]
Chaining the dual norm inequalities in \textit{Step (C)} then yields the desired bound~\eqref{eqn:drdiffbound}.  \hfill$\square$

\section{Computation of reference saturation coefficients}
In this section, we detail on the numerical computation of the saturation coefficient $S(\hat{\sp H}, \hat{\sp V}, \hat{ \sp F})$. To allow computation of this coefficient, we first write it as the solution to a generalized Eigenvalue problem. We then discuss the construction of bases for the spaces $\hat{\sp H}, \hat{\sp V},$ and $\hat{\sp F}$ involved in the specific saturation coefficients of Theorem~\ref{thm:reduction}.

\subsection{An equivalent problem}
In our applications, $\hat{\sp F}$ is a \emph{finite-dimensional subspace} of $\hat{\sp H}'$ rather than just a subset, allowing us to write $S(\hat{\sp H}, \hat{\sp V}, \hat{\sp F})$ as
$\sup_{\set{\hat F \in \hat{\sp F}: \norm{\hat F}_{\hat{\sp V}'} = 1}} \norm{\hat F}_{\hat{\sp H}'}$. Since $\hat{\sp F}$ is finite-dimensional, this supremum is attained, so we may equivalently solve
\begin{equation}
  \text{Find the largest $0 < \mu = S(\hat{\sp H}, \hat{\sp V}, \hat{\sp F})$ s.t., for some $0 \not= \hat F \in \hat{\sp F}$,} \quad \norm{\hat F}_{\hat{\sp H}'}^2 = \mu^2 \norm{\hat F}_{\hat{\sp V}'}^2.
  \label{eqn:norm-problem}
\end{equation}

\begin{prop}[Equivalent generalized Eigenvalue problem]
  Let $\Xi_{\hat{\sp H}}$, $\Xi_{\hat{\sp V}}$, and $\Sigma_{\hat{\sp F}}$ be bases for the three spaces. For $\hat U \in \{ \hat{\sp H}, \hat{\sp V}\}$, denote the stiffness matrices as $\mat A_{\hat U} := \honeinp{\Xi_{\hat U}}{\Xi_{\hat U}}{\refsq} = \left[ \honeinp{\xi_{i, \hat U}}{\xi_{j, \hat U}}{\refsq} \right]_{i,j=1}^{\# \Xi_{\hat U}}$, and load matrices as \mbox{$\mat L_{\hat U} := \Sigma_{\hat{\sp F}}(\Xi_{\hat U}) := \left[\sigma_i(\xi_{j, \hat U})\right]_{i,j}$}. Then the above problem is equivalent to finding the largest generalized Eigenvalue $\mu^2$ of the system
  \begin{equation}
    \mat R_{\hat{\sp H}} \vec F = \mu^2 \mat R_{\hat{\sp V}} \vec F, \quad \text{where} \quad \mat R_{\hat U} := \mat L_{\hat U} \mat A_{\hat U}^{-\top} \mat L_{\hat U}^{\top}.
  \label{eqn:genev-problem}
  \end{equation}
\end{prop}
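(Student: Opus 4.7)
The plan is to interpret the dual norm $\norm{\hat F}_{\hat U'}$ (for $\hat U \in \set{\hat{\sp H}, \hat{\sp V}}$, each Hilbertian under the $H^1(\refsq)$-seminorm) via its Riesz representer, and to express this representer in the basis $\Xi_{\hat U}$. Writing $\hat F \in \hat{\sp F}$ in the basis $\Sigma_{\hat{\sp F}}$ with coefficient vector $\vec F$, and writing its Riesz representer $R_{\hat U}\hat F \in \hat U$ (characterized by $\honeinp{R_{\hat U}\hat F}{v}{\refsq} = \hat F(v)$ for all $v \in \hat U$) in the basis $\Xi_{\hat U}$ with coefficient vector $\vec w$, I would test the Riesz identity against each basis function of $\Xi_{\hat U}$; using symmetry of $\mat A_{\hat U}$ this yields the linear system $\mat A_{\hat U} \vec w = \mat L_{\hat U}^\top \vec F$.

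From there, the Riesz isometry and a direct coordinate computation give
\[
  \norm{\hat F}_{\hat U'}^2 \;=\; \hat F(R_{\hat U}\hat F) \;=\; \vec F^\top \mat L_{\hat U} \vec w \;=\; \vec F^\top \mat L_{\hat U} \mat A_{\hat U}^{-\top} \mat L_{\hat U}^\top \vec F \;=\; \vec F^\top \mat R_{\hat U} \vec F,
\]
again invoking symmetry to replace $\mat A_{\hat U}^{-1}$ by $\mat A_{\hat U}^{-\top}$ and recover the form stated in the proposition. Substituting this identity for $\hat U = \hat{\sp H}$ and $\hat U = \hat{\sp V}$ into the defining equation $\norm{\hat F}_{\hat{\sp H}'}^2 = \mu^2 \norm{\hat F}_{\hat{\sp V}'}^2$ of~\eqref{eqn:norm-problem} reduces it to the coordinate statement $\vec F^\top \mat R_{\hat{\sp H}} \vec F = \mu^2 \vec F^\top \mat R_{\hat{\sp V}} \vec F$ for some $\vec F \neq 0$.

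The remaining step is classical: the largest such $\mu^2$ is the supremum of the generalized Rayleigh quotient $\vec F^\top \mat R_{\hat{\sp H}} \vec F / \vec F^\top \mat R_{\hat{\sp V}} \vec F$ over $\vec F \neq 0$, which---provided $\mat R_{\hat{\sp V}}$ is symmetric positive definite---is attained at and equals the largest generalized eigenvalue of~\eqref{eqn:genev-problem}. The main point requiring care is definiteness of $\mat R_{\hat{\sp V}}$: a degeneracy $\vec F^\top \mat R_{\hat{\sp V}} \vec F = 0$ for $\vec F \neq 0$ would correspond to a nontrivial $\hat F \in \hat{\sp F}$ vanishing identically on $\hat{\sp V}$, forcing $\norm{\hat F}_{\hat{\sp V}'} = 0$ and hence $S(\hat{\sp H}, \hat{\sp V}, \hat{\sp F}) = \infty$---a saturation failure excluded in the cases of interest arising in Theorem~\ref{thm:reduction}. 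Symmetry of both $\mat R_{\hat{\sp H}}$ and $\mat R_{\hat{\sp V}}$ is immediate from the product form together with symmetry of $\mat A_{\hat U}^{-\top} = \mat A_{\hat U}^{-1}$.
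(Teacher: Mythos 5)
Your proposal is correct and takes essentially the same route as the paper: the Riesz representer $R_{\hat U}\hat F$ with respect to the $H^1(\refsq)$-seminorm inner product is exactly the Galerkin solution $u_{\hat U}$ that the paper introduces, and both arguments reduce $\norm{\hat F}_{\hat U'}^2$ to $\vec F^\top \mat R_{\hat U}\vec F$ before invoking the Rayleigh-quotient characterization. If anything, you are slightly more careful than the paper in justifying positive-definiteness of $\mat R_{\hat{\sp V}}$ (the paper simply asserts it), which is a welcome addition.
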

\begin{proof}
Let, for $\hat U \in \{\hat{\sp H}, \hat{\sp V}\}$ and $\hat F \in \hat{\sp F}$, the function $u_{\hat U} = u_{\hat U}(\hat F)$ be the unique solution to
\begin{equation}
  \honeinp{u_{\hat U}}{v_{\hat U}}{\refsq} = \hat F(v_{\hat U}) \quad (v_{\hat U} \in \hat U).
  \label{eqn:genev-galerkin}
\end{equation}
  Recalling that we equip $\hat U$ with $\seminorm{\cdot}_{\refsq} := \norm{\nabla \cdot}_{\refsq}$, and thus $\hat U'$ with the corresponding dual norm, we have $\norm{\hat F}_{\hat U'} = \seminorm{u_{\hat U}}_{\refsq}$. Write $\hat F$ as $\vec F^\top \Sigma_{\hat{\sp F}}$ and $u_{\hat U}$ as $u_{\hat U} = \vec u_{\hat U}^\top \Xi_{\hat U}$; then $\vec u_{\hat U}^\top \mat A_{\hat U} = \honeinp{u_{\hat U}}{\Xi_{\hat U}}{\refsq} = \hat F(\Xi_{\hat U}) = \vec F^\top \mat L_{\hat U}$, or $\mat A_{\hat U}^\top \vec u_{\hat U} = \mat L_{\hat U}^\top \vec F$. Now, $\mat A_{\hat U}$ is invertible, so $\vec u_{\hat U} = \mat A_{\hat U}^{-\top} \mat L_{\hat U}^{\top} \vec F$.
  We see $\norm{\hat F}_{\hat U'}^2 = \seminorm{u_{\hat U}}_{\refsq}^2 = \vec u_{\hat U}^\top \mat A_{\hat U} \vec u_{\hat U} = \vec F^{\top} \mat L_{\hat U} \mat A_{\hat U}^{-\top} \mat L_{\hat U}^\top \vec F = \vec F^\top \mat R_{\hat U} \vec F$, reducing problem~\eqref{eqn:norm-problem} to
  \[
    \text{Find largest $\mu > 0$ s.t.~for some $\vec F \not= \vec 0$,} \quad
    \vec F^\top \mat R_{\hat{\sp H}} \vec F = \mu^2 \vec F^\top \mat R_{\hat{\sp V}} \vec F
    \iff
    \mu^2 = \frac{\vec F^\top \mat R_{\hat{\sp H}} \vec F}{\vec F^\top \mat R_{\hat{\sp V}} \vec F},
  \]
  which, by virtue of both $\mat R_{\hat{\sp H}}$ and $\mat R_{\hat{\sp V}}$ being symmetric positive-definite, is a Rayleigh quotient for the generalized Eigenvalue problem of~\eqref{eqn:genev-problem} (cf.~\cite{Li2015}).
\end{proof}

\subsection{Discrete saturation coefficients}
In all of the cases of Theorem~\ref{thm:reduction}, the space $\hat{\sp H}$ is an infinite-dimensional closed subspace of $H^1(\refsq)$, so computing $S(\hat{\sp H}, \hat{\sp V}, \hat{ \sp F})$ by means of~\eqref{eqn:genev-problem} will likely not be possible. However, the following result shows that we may restrict ourselves to a finite-dimensional subspace that is large enough.

\begin{lemma}
  Since $\hat{\sp F}$ is a finite-dimensional subspace of $\hat{\sp H}'$, a compactness argument shows that the \emph{discrete saturation coefficient} $S(\hat{\sp H} \cap \Q_r(\refsq), \hat{\sp V}, \hat{\sp F})$ tends to $S(\hat{\sp H}, \hat{\sp V}, \hat{\sp F})$ for $r \to \infty$.
  \label{lem:discsat}
\end{lemma}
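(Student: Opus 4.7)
The plan is to reduce the convergence of saturation coefficients to pointwise convergence of dual norms on $\hat{\sp F}$, and then upgrade this to uniform convergence on the unit sphere of $\hat{\sp F}$ by exploiting its finite-dimensionality. Since $\hat{\sp H} \cap \Q_r(\refsq)$ is an increasing sequence of closed subspaces of $\hat{\sp H}$, the dual norms $\norm{\hat F}_{(\hat{\sp H} \cap \Q_r(\refsq))'}$ form a monotone nondecreasing sequence bounded above by $\norm{\hat F}_{\hat{\sp H}'}$; dividing by the $r$-independent denominator $\norm{\hat F}_{\hat{\sp V}'}$ and taking the supremum yields the one-sided bound $S(\hat{\sp H} \cap \Q_r(\refsq), \hat{\sp V}, \hat{\sp F}) \leq S(\hat{\sp H}, \hat{\sp V}, \hat{\sp F})$ for every $r$, so it suffices to prove the matching lower bound in the limit.

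For the pointwise convergence step, I would fix $\hat F \in \hat{\sp F}$ and let $u \in \hat{\sp H}$ be its Riesz representer with respect to $\honeinp{\cdot}{\cdot}{\refsq}$, so $\seminorm{u}_{\refsq} = \norm{\hat F}_{\hat{\sp H}'}$. Its Galerkin approximation $u_r \in \hat{\sp H} \cap \Q_r(\refsq)$ satisfies $\seminorm{u_r}_{\refsq} = \norm{\hat F}_{(\hat{\sp H} \cap \Q_r(\refsq))'}$, and Pythagoras gives $\seminorm{u}_{\refsq}^2 = \seminorm{u - u_r}_{\refsq}^2 + \seminorm{u_r}_{\refsq}^2$. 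Combined with Cea's lemma, this reduces the problem to showing density of $\bigcup_r (\hat{\sp H} \cap \Q_r(\refsq))$ in $\hat{\sp H}$ in the $H^1(\refsq)$-seminorm. For the cases relevant to Theorem~\ref{thm:reduction}, namely $\hat{\sp H} = H^1_{0, \E}(\refsq)$ ($\E \subset \E_{\refsq}$) and $\hat{\sp H} = H^1(\refsq)/\R$, this is standard: in the quotient case it is immediate from Weierstrass applied in $C^1(\overline{\refsq})$, and in the Dirichlet case one writes a smooth $\phi \in H^1_{0, \E}(\refsq)$ via Hadamard's lemma as $\phi = \ell \cdot \tilde \phi$ with $\ell := \prod_{\hat e \in \E} \ell_{\hat e}$ a product of affine factors vanishing on $\E$ and $\tilde \phi$ smooth, approximates $\tilde \phi$ in $C^1(\overline{\refsq})$ by polynomials, and multiplies back by $\ell$.

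Finally, to upgrade to uniform convergence of the suprema, note that each map $\hat F \mapsto \norm{\hat F}_{(\hat{\sp H} \cap \Q_r(\refsq))'}$ is a continuous seminorm on $\hat{\sp F}$, that the sequence is monotone nondecreasing in $r$, and that the pointwise limit $\hat F \mapsto \norm{\hat F}_{\hat{\sp H}'}$ is continuous. Dini's theorem, applied on the compact unit sphere $\set{\hat F \in \hat{\sp F} : \norm{\hat F}_{\hat{\sp V}'} = 1}$ (compact because $\hat{\sp F}$ is finite-dimensional), promotes pointwise convergence to uniform convergence, yielding $S(\hat{\sp H} \cap \Q_r(\refsq), \hat{\sp V}, \hat{\sp F}) \to S(\hat{\sp H}, \hat{\sp V}, \hat{\sp F})$. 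I expect the main technical point to be the density claim in the Dirichlet case $\hat{\sp H} = H^1_{0, \E}(\refsq)$: one must ensure the polynomial approximants respect the homogeneous traces on $\E$ without losing the $H^1$-convergence rate, which the axis-aligned geometry of $\refsq$ makes particularly clean via the factor trick above.
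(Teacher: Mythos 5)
Your proof is correct, and it fills in what the paper only gestures at by the phrase ``a compactness argument.'' The three-stage structure is sound: the monotone upper bound $S(\hat{\sp H}\cap\Q_r(\refsq),\hat{\sp V},\hat{\sp F}) \leq S(\hat{\sp H},\hat{\sp V},\hat{\sp F})$ is immediate from nesting; the pointwise limit $\norm{\hat F}_{(\hat{\sp H}\cap\Q_r(\refsq))'}\to\norm{\hat F}_{\hat{\sp H}'}$ follows from Galerkin orthogonality, Pythagoras, and density of $\bigcup_r(\hat{\sp H}\cap\Q_r(\refsq))$ in $\hat{\sp H}$; and the factorization via Hadamard's lemma for $H^1_{0,\E}(\refsq)$ correctly handles the only delicate case (adjacent edges in $\E$, where the vanishing of the Hadamard quotient on the remaining edges is ensured by continuity into the corner). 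The implicit hypothesis you rely on---that $\norm{\cdot}_{\hat{\sp V}'}$ is a genuine norm on $\hat{\sp F}$ so that the unit sphere is compact---is exactly what makes the saturation coefficient finite and well-defined in the first place, so this is consistent with the lemma's intent.

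One small observation: the Dini step is more than is strictly needed. Once you have the monotone inequality $g_r(\hat F) \leq g(\hat F)$ and pointwise convergence $g_r(\hat F) \to g(\hat F)$ for the ratios $g_r(\hat F) := \norm{\hat F}_{(\hat{\sp H}\cap\Q_r(\refsq))'}/\norm{\hat F}_{\hat{\sp V}'}$, the convergence of suprema follows directly: $\sup g_r \leq \sup g$ on one hand, while for every fixed $\hat F$ one has $\liminf_r \sup g_r \geq \lim_r g_r(\hat F) = g(\hat F)$, and taking the supremum over $\hat F$ gives $\liminf_r \sup g_r \geq \sup g$. This avoids invoking Dini's theorem (and the continuity of the limit), though it does not yield the uniform convergence your argument additionally delivers. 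Both routes are valid; yours matches the paper's compactness framing more closely, while the alternative is slightly more elementary.
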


\subsection{Bases for the subspaces}
Solving~\eqref{eqn:genev-problem} hinges on computing the stiffness matrix $\mat A_{\hat {\sp H}}$ and load matrix $\mat L_{\hat {\sp H}}$, which depend on the choice of basis.
In practice, we are able to choose these bases with tensor-product structure. For instance, when $\Xi_{\hat{\sp H}} =: \Xi^1 \otimes \Xi^2$, we see
\[
  \mat A_{\hat{\sp H}} = \honeinpx{\Xi^1}{\Xi^1}{\hat I} \otimes \ltwoinp[x]{\Xi^2}{\Xi^2}{\hat I}
  + \ltwoinp[x]{\Xi^1}{\Xi^1}{\hat I} \otimes \honeinpx{\Xi^2}{\Xi^2}{\hat I},
\]
where $\hat I := [-1,1]$, so that $\refsq = \hat I \times \hat I$, and
with $\otimes$ denoting the Kronecker product. Essentially, computation of the saturation coefficient boils down to computing a number of inner products.

Define $L_k$ as the $k$th Legendre polynomial, with $\deg L_k = k$ and $L_k(1) = 1$. The functions
\[
  \varphi_k(x) := \sqrt{k + \tfrac{1}{2}} L_k(x), \quad (k \geq 0)
\]
then constitute an $L_2(\hat I)$-orthonormal basis called the \emph{Legendre} basis.
Moreover, the functions
\[
  \xi_k(x) := \sqrt{k - \tfrac{1}{2}} \int_x^1 L_{k-1}(s) \dif s = \frac{1}{\sqrt{4k-2}}(L_{k-2}(x) - L_k(x)), \quad (k \geq 2)
\]
constitute an orthonormal basis with respect to the $H^1(\hat I)$-seminorm which we call the \emph{Babu\v{s}ka-Shen} basis. With respect to the $L_2(\hat I)$-inner product, this basis is quasi-orthogonal in that
\[
  \ltwoinp[x]{\xi_k}{\xi_m}{\hat I} = 0 \iff k-m \not\in \set{-2, 0, 2}, \quad \text{and} \quad
  \ltwoinp[x]{\varphi_k}{\xi_m}{\hat I} = 0 \iff k-m \not\in \set{0, 2}.
\]
We can supplement the Babu\v{s}ka-Shen basis with $\xi_1(x) = \frac{1}{2}\sqrt{2} (1-x)$ to find an orthonormal basis for $H^1_{0, \set{1}}(\hat I)$, and with $\tilde \xi_1(x) := \xi_1(-x)$ for a basis for $H^1_{0, \set{-1}}(\hat I)$. These supplemented functions are $L_2(\hat I)$-orthogonal to $\xi_m$ for $m \geq 4$.

Recall the saturation coefficients from Theorem~\ref{thm:reduction}. The space $\hat{\sp V}$ is in every case just $\hat{\sp H}$ restricted to polynomials of lower degree, so we will focus on building bases for $\hat{\sp H}$ and $\hat{\sp F}$.

\subsubsection{First discrete coefficient $S^{(A)}_{\E, p, q, r}$} Denote 
\[
  \hat{\sp H} := H^1_{0, \E}(\refsq) \cap \Q_r(\refsq), \quad \hat{\sp F} := \{h \mapsto \ltwoinp{\phi}{h}{\refsq}: \phi \in \Q_p(\refsq)\} \subset \hat{\sp H}'.
\]
A tensorized basis $\Xi_{\hat{\sp H}} = \Xi^1 \otimes \Xi^2$ for $\hat{\sp H}$ is readily constructed through the Babu\v{s}ka-Shen basis, supplemented to account for boundary conditions, up to degree $r$ in each coordinate.

    Choosing $\Phi := \Phi^1 \otimes \Phi^2$ with $\Phi^1 := \Phi^2$ the Legendre basis up to degree $p$, we set $\Sigma_{\hat{\sp F}} := \ltwoinp{\Phi}{\cdot}{\refsq}$. Then, the load matrix can be computed from
    $\mat L_{\hat{\sp H}} = \ltwoinp{\Phi}{\Xi_{\hat{\sp H}}}{\refsq} = \ltwoinp[x]{\Phi^1}{\Xi^1}{\hat I} \otimes \ltwoinp[x]{\Phi^2}{\Xi^2}{\hat I}$.

\subsubsection{Second discrete coefficient $S^{(B)}_{\E, p, q, r}$} The space $\hat{\sp H}$ is the same as in $S^{(A)}_{\E, p, q, r}$ so its basis $\Xi_{\hat{\sp H}} = \Xi^1 \otimes \Xi^2$ is readily constructed.  For $\hat{\sp F} := \set{h \mapsto \ltwoinp[s]{\phi}{h}{\hat e_1} : \phi \in \P_p(\hat e_1)}$, we choose the basis $\Sigma_{\hat{\sp F}} := \ltwoinp[s]{\Phi}{\cdot}{\hat e_1}$ with $\Phi$ the Legendre basis for $\P_p(\hat e_1)$. Then
    \begin{equation}
      \mat L_{\hat{\sp H}} =
      \ltwoinp[s]{\Phi}{\Xi_{\hat {\sp H}}}{\hat e_1} = \Xi^1(1) \otimes \ltwoinp[x]{\Phi}{\Xi^2}{\hat I} \quad \text{where} \quad \Xi^1(x) := \left(\xi(x)\right)_{\xi \in \Xi^1}.
      \label{eqn:F2basis}
    \end{equation}
    The polynomials $\xi \in \Xi^1$ with $\deg \xi \geq 2$ have $\xi(1) = 0$, so $\mat L_{\sp H}$ is sparse with entire zero rows.
    
\subsubsection{Third discrete coefficient $S^{(C)}_{p,q,r}$} To create a basis for $\hat{\sp H} := \Q_r(\refsq)/\R$, we first construct
\[
  X := \set{\chi_0, \xi_1 - \ltwoinp{\xi_1}{\bbone}{\hat I}, \xi_2 - \ltwoinp{\xi_2}{\bbone}{\hat I}, \ldots, \xi_r - \ltwoinp{\xi_r}{\bbone}{\hat I}}, \quad \text{where} \quad \chi_0 := \bbone/\sqrt{2}
\]
which is a basis for $\Q_r(\hat I)$, (almost) orthogonal w.r.t.~the $H^1(\hat I)$-seminorm, with every element except $\chi_0$ having zero mean.
The set $\Xi_{\hat{\sp H}} := X \times X \setminus \set{\chi_0 \otimes \chi_0}$ then consists of linearly independent polynomials with zero mean, and is of correct cardinality, hence a basis for $\hat{\sp H}$.
    
Legendre polynomials $\phi_k$ of degree $k \geq 1$ have mean zero, so \mbox{$\Phi_* := \set{\phi_k: 1 \leq k \leq p}$} is a basis for $\P_p(\hat e_1)/\R$, and $\Sigma_{\hat{\sp F}} := \ltwoinp{\Phi_*}{\cdot}{\hat e_1}$ a basis for $\hat{\sp F} := \set{h \mapsto \ltwoinp{\phi}{h}{\hat e_1} : \phi \in \P_p(\hat e_1)/\R}$. Its load matrix is formed analogously to~\eqref{eqn:F2basis}.

\section{Numerical results}
\label{sec:numres}
In Theorem~\ref{thm:reduction}, we showed that patch-based $p$-robust saturation holds, under the assumption that a number of quantities on the reference square are finite. More specifically, we are interested in finding a function $q: \N \to \N$ such that the \emph{saturation coefficients} 
\[
  S^{(A)}_{\E, p, q(p)}, \quad S^{(B)}_{\mathcal F, p, q(p)}, \quad S^{(C)}_{p, q(p)} \quad (\E \in \mathbb E^{(A)}, ~~ \mathcal F \in \mathbb E^{(B)})
\]
(cf.~Thm.~\ref{thm:reduction}) are uniformly bounded in $p$. Unable to compute the limit $p \to \infty$, we resort to computing them for a number of large but finite values of $p$, and extrapolate from this progression. Moreover, with our current approach, we are unable to compute the above quantities, so we instead compute the \emph{discrete saturation coefficients}
\[
  S^{(A)}_{\E, p, q, r}, \quad S^{(B)}_{\mathcal F, p, q, r}, \quad S^{(C)}_{p, q, r} \quad (\E \in \mathbb E^{(A)}, ~~ \mathcal F \in \mathbb E^{(B)})
\]
(cf.~Lemma~\ref{lem:discsat}) for some values of $r$ that are large relative to $p$ and $q$, and expect to see \emph{$r$-stabilization} of the discrete coefficient to its ``continuous'' counterpart.

In~\cite{Canuto2017}, it was shown that (for a slightly different setting), a strategy of the form $q(p) = p+n$ with $n \in \N$ is insufficient, whereas for \emph{any} $\lambda > 0$, the choice $q(p) = p + \lceil \lambda p \rceil$ exhibits saturation. This motivates our choice to run numerical computations for
\[
  q(p) = p+4, \quad q(p) = p + \lceil p/7 \rceil, \quad q(p) = 2p.
\]

By symmetry, there are five fundamentally different configurations of sets of edges in $\mathbb E^{(A)}$:
\[
  \E_1 := \set{\hat e_1}, \quad \E_2 := \set{\hat e_1, \hat e_2}, \quad \E_3 := \set{\hat e_1, \hat e_3}, \quad \E_4 := \set{\hat e_1, \hat e_2, \hat e_3}, \quad \E_5 := \set{\hat e_1, \hat e_2, \hat e_3, \hat e_4}.
\]
Moreover, enumerating the elements of $\mathbb E^{(B)}$ as
\[
  \mathcal F_1 := \set{\hat e_2}, \quad \mathcal F_2 := \set{\hat e_3}, \quad \mathcal F_3 := \set{\hat e_2, \hat e_3}, \quad \mathcal F_4 := \set{\hat e_2, \hat e_3, \hat e_4},
\]
we conclude that there are $5+4+1=10$ reference problems to investigate.

Results were gathered using the sparse matrix library \texttt{scipy.sparse} with \texttt{float64} matrices, using \texttt{scipy.sparse.linalg.spsolve} and \texttt{scipy.sparse.linalg.eigsh}, with default settings. Sparsity of the matrices ensures highly accurate results.

See Table~\ref{tbl:coeffs} for the computed results. First we study the $r$-stabilization by means of the three `hardest' problems (ordered by saturation coefficient for $p=4$, $q=8$, $r=16$). There is little difference between $r=2q$, $r=4q$, and $r=8q$, indicating that $r=2q$ is sufficient.

Choosing $q=p+4$ is insufficient for $p$-robust saturation: for every problem, the discrete saturation coefficients increase as a function of $p$. For the two strategies $q=p + \lceil p/7 \rceil$ and $q = 2p$, we see that these coefficients \emph{decrease} as a function of $p$, stringly suggesting $p$-robust saturation for $p \to \infty$. For $q = 2p$, these values even tend to $1$, indicating full saturation.

\begin{table}[t]
  \footnotesize
  \definecolor{Gray}{gray}{0.8}
\newcolumntype{g}{>{\columncolor{Gray}}r}
\newcolumntype{H}{>{\setbox0=\hbox\bgroup}c<{\egroup}@{}}
\begin{tabular}{lr ggggH rrrH ggggH}
  && \multicolumn{4}{c}{$q(p) = p + 4$}             & \multicolumn{4}{c}{$q(p) = p + \lceil p/7 \rceil$}  && \multicolumn{4}{c}{$q(p) = 2p$}  \\
  && \multicolumn{4}{c}{$\overbrace{\hspace{4cm}}$} & \multicolumn{4}{c}{$\overbrace{\hspace{3cm}}$}      && \multicolumn{4}{c}{$\overbrace{\hspace{4cm}}$}  \\
\toprule
  & &      $p=4$   &       12  &       28  &       60  &      124 & $p=14$  &       28  &       56  &      112 & $p=4$   &      8   &      16  &      32  &      64  \\
  & $r$ &      $q=8$   &       16  &       32  &       64  &      128 & $q=16$  &       32  &       64  &      128 & $q=8$   &      16  &      32  &      64  &      128 \\
\midrule
  {\small $S^{(A)}_{\E_1, p, q, r}$}  & $2q$          &  1.0017 &  1.0344 &  1.1562 &  1.4015 &  --- & 1.1905 &  1.1562 &  1.1431 &  --- &  1.0017 &  1.0005 &  1.0003 &  1.0002 &  1.0002 \\\hline
  {\small $S^{(A)}_{\E_2, p, q, r}$  }& $2q$          &  1.0120 &  1.1076 &  1.3505 &  1.7715 &  --- & 1.3970 &  1.3505 &  1.3334 &  --- &  1.0120 &  1.0060 &  1.0042 &  1.0035 &  1.0032 \\\hline
  {\small $S^{(A)}_{\E_3, p, q, r}$  }& $2q$          &  1.0017 &  1.0350 &  1.1580 &  1.4039 &  --- & 1.1945 &  1.1580 &  1.1440 &  --- &  1.0017 &  1.0006 &  1.0003 &  1.0002 &  1.0002 \\\hline
  {\small $S^{(A)}_{\E_4, p, q, r}$ } & $2q$          &  1.0138 &  1.1112 &  1.3541 &  1.7744 &  --- & 1.4038 &  1.3541 &  1.3352 &  --- &  1.0138 &  1.0065 &  1.0044 &  1.0036 &  1.0033 \\\hline
  {\small $S^{(A)}_{\E_5, p, q, r}$}  & $2q$          &  1.0150 &  1.1143 &  1.3575 &  1.7772 &  --- & 1.4101 &  1.3575 &  1.3370 &  --- &  1.0150 &  1.0070 &  1.0046 &  1.0037 &  1.0033 \\\hline
   \multirow{3}{*}{{\small $S^{(B)}_{\mathcal F_1, p, q, r}$}}
                            & $2q$            &  1.0295 &  1.1012 &  1.2092 &  1.3429 &  --- & 1.2380 &  1.2092 &  1.1952 &  --- &  1.0295 &  1.0204 &  1.0165 &  1.0147 &  1.0138 \\
                            & $4q$            &  1.0317 &  1.1075 &  1.2196 &  1.3570 &  --- & 1.2502 &  1.2196 &  1.2048 &  --- &  1.0317 &  1.0218 &  1.0176 &  1.0156 &     --- \\
                            & $8q$            &  1.0318 &  1.1079 &  1.2203 &     --- &  --- & 1.2511 &  1.2203 &     --- &  --- &  1.0318 &  1.0219 &  1.0176 &     --- &     --- \\\hline
  {\small $S^{(B)}_{\mathcal F_2, p, q, r}$}  
                            & $2q$            &  1.0013 &  1.0106 &  1.0314 &  1.0634 &  --- & 1.0385 &  1.0314 &  1.0277 &  --- &  1.0013 &  1.0006 &  1.0004 &  1.0003 &  1.0003 \\\hline
  \multirow{3}{*}{{\small $S^{(B)}_{\mathcal F_3, p, q, r}$}}
                             & $2q$           &  1.0295 &  1.1012 &  1.2092 &  1.3429 &  --- & 1.2380 &  1.2092 &  1.1952 &  --- &  1.0295 &  1.0204 &  1.0165 &  1.0147 &  1.0138 \\
                             & $4q$           &  1.0317 &  1.1075 &  1.2196 &  1.3570 &  --- & 1.2502 &  1.2196 &  1.2048 &  --- &  1.0317 &  1.0218 &  1.0176 &  1.0156 &     --- \\
                             & $8q$           &  1.0318 &  1.1079 &  1.2203 &     --- &  --- & 1.2511 &  1.2203 &     --- &  --- &  1.0318 &  1.0219 &  1.0176 &     --- &     --- \\
                            \hline
   \multirow{3}{*}{{\small $S^{(B)}_{\mathcal F_4, p, q, r}$}} 
                             & $2q$           &  1.0346 &  1.1055 &  1.2118 &  1.3443 &  --- & 1.2430 &  1.2118 &  1.1965 &  --- &  1.0346 &  1.0226 &  1.0175 &  1.0152 &  1.0140 \\
                             & $4q$           &  1.0374 &  1.1123 &  1.2227 &  1.3587 &  --- & 1.2563 &  1.2227 &  1.2064 &  --- &  1.0374 &  1.0242 &  1.0186 &  1.0161 &     --- \\
                             & $8q$           &  1.0376 &  1.1128 &  1.2234 &     --- &  --- & 1.2572 &  1.2234 &     --- &  --- &  1.0376 &  1.0243 &  1.0187 &     --- &     --- \\
                            \hline
  {\small $S^{(C)}_{p, q, r}$}    & $2q$          &  1.0013 &  1.0106 &  1.0313 &  1.0634 &  --- & 1.0384 &  1.0313 &  1.0277 &  --- &  1.0013 &  1.0006 &  1.0004 &  1.0003 &  1.0003 \\
\bottomrule
\end{tabular}

  \vspace{0.5em}
  \caption{Discrete saturation coefficients for different $p, q$, and $r$. We discern three `bands' of columns, one for each function $q$, and within each band, different values of $p$, one per column. We moreover see a number of different `bands' of rows, one for each reference problem; within each band, a number of different discrete saturation coefficients are shown, one for each $(p, q, r)$-tuple.}
  \label{tbl:coeffs}
\end{table}

\section*{Acknowledgement}
The author wishes to thank their advisor Rob Stevenson for the many fruitful discussions.

\bibliographystyle{hsiam}

\end{document}